\begin{document}

\title[Small projective spaces and Stillman uniformity for sheaves]{Small projective spaces and\\ Stillman uniformity for sheaves}

\author{Daniel Erman}
\address{Department of Mathematics, University of Wisconsin, 480 Lincoln Drive, Madison, WI 53706}
\email{derman@math.wisc.edu}
\urladdr{\url{http://math.wisc.edu/~derman/}}

\author{Steven V Sam}
\address{Department of Mathematics, University of California, San Diego, 9500 Gilman Dr., La Jolla, CA 92093}
\email{ssam@ucsd.edu}
\urladdr{\url{http://math.ucsd.edu/~ssam/}}

\author{Andrew Snowden}
\address{Department of Mathematics, University of Michigan, 530 Church Street, Ann Arbor, MI 48109}
\email{asnowden@umich.edu}
\urladdr{\url{http://www-personal.umich.edu/~asnowden/}}


\subjclass[2010]{
14F05, 
13D02.
}
\keywords{Stillman's Conjecture, sheaf cohomology}
\thanks{DE was partially supported by NSF DMS-1601619. SS was partially supported by NSF DMS-1849173 and a Sloan Fellowship. AS was supported by NSF DMS-1453893.}

\begin{abstract}
We prove an analogue of Ananyan--Hochster's small subalgebra theorem in the context of sheaves on projective space, and deduce from this a version of Stillman's Conjecture for cohomology tables of sheaves. The main tools in the proof are Draisma's $\GL$-noetherianity theorem and the BGG correspondence.
\end{abstract}

\maketitle

\section{Introduction}

Ananyan and Hochster \cite{ananyan-hochster} recently established strong finiteness results on the Betti tables of graded modules, including Stillman's conjecture. In Boij--S\"oderberg theory, Betti tables of graded modules play a dual role to cohomology tables of coherent sheaves \cite{eisenbud-schreyer-jams,eisenbud-schreyer-icm}. It is therefore natural to look for analogs of the results of Ananyan--Hochster for cohomology tables. The purpose of this paper is to establish such results.

\subsection{The work of Ananyan--Hochster}

To put our results in context, we first review the work of Ananyan--Hochster. Fix a field $\bk$ and let $R$ be a finitely generated $\bk$-algebra with $R_0=\bk$ and $R_n=0$ for $n<0$. Let $M$ be a finitely generated graded $R$-module, generated in degrees $\geq d$. Define $\beta_{i,j}(M)=\dim_{\bk} \Tor_i^R(M, \bk)_j$, where the subscript here indicates the $j$th graded piece. These numbers are arranged into a table---the {\bf Betti table} of $M$---as follows:
\begin{displaymath}
\beta(M) = \begin{array}{|ccccccc} \hline
\beta_{0,d}& \beta_{1,d+1}&\beta_{2,d+2} &\beta_{3,d+3}&\beta_{4,d+4}&\cdots\\
\beta_{0,d+1}& \beta_{1,d+2}&\beta_{2,d+3} &\beta_{3,d+4}&\beta_{4,d+5}&\cdots\\
\vdots && \vdots&& \vdots \\
\beta_{0,n}& \beta_{1,n+1}&\beta_{2,n+2} &\beta_{3,n+3}&\beta_{4,n+4}&\cdots\\
\vdots && \vdots&& \vdots \\
\end{array}
\end{displaymath}
Some important properties of $M$ can be read off from the Betti table: the first two columns record the degrees of generators and relations; the Castelnuovo-Mumford regularity is encoded by the lowest row with a non-zero entry, that is by $\max\{j-i \mid \beta_{i,j}\ne 0\}$; and the projective dimension is the index of the last nonzero column. Moreover, the graded Betti numbers provide a refinement of the Hilbert polynomial of $M$.

Betti tables are invariant under flat extension of scalars: that is, if $R \to S$ is a flat morphism of $\bk$-algebras (of the sort under consideration) and $M$ is an $R$-module then $\beta(M)=\beta(S \otimes_R M)$. Therefore, to control $\beta(M)$ one could try to descend $M$ to a more manageable ring. This kind of descent is exactly what Ananyan--Hochster achieve:

\begin{theorem}[Ananyan--Hochster] \label{thm:ah}
Let $\bb$ and $\bb'$ be column vectors. Then there exist integers $n$ and $d$ with the following property. Suppose $S$ is a standard-graded polynomial algebra over $\bk$ (in however many variables) and $M$ is a graded $S$-module such that the first two columns of $\beta(M)$ are $\bb$ and $\bb'$. Then there exists a graded polynomial algebra $S_0$, generated by at most $n$ variables of degrees at most $d$, a flat $\bk$-algebra homomorphism $S_0 \to S$, and a graded $S_0$-module $M_0$ such that $M \cong S \otimes_{S_0} M_0$.
\end{theorem}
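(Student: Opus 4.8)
The plan is to reduce the statement to a purely structural fact about finite collections of forms and then to invoke the ``strength'' machinery of \cite{ananyan-hochster}. Since $\bb$ and $\bb'$ are fixed finite vectors, they bound both the number and the degrees of the minimal generators and relations of $M$; hence $M$ has a minimal graded presentation $F_1 \xrightarrow{\phi} F_0 \to M \to 0$ in which $F_0,F_1$ are free of total rank $\le c$ and, by minimality, every nonzero entry of the matrix $\phi$ is a form of degree between $1$ and $d_0$, where $c$ and $d_0$ depend only on $(\bb,\bb')$. Let $\mathcal F=\{f_1,\dots,f_c\}$ collect these entries. It then suffices to prove: for suitable $n,d$ depending only on $(\bb,\bb')$ there is a homogeneous regular sequence $g_1,\dots,g_s$ in $S$ with $s\le n$ and $\deg g_i\le d$ such that $\mathcal F\subseteq\bk[g_1,\dots,g_s]=:S_0$. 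Granting this, the theorem is formal. First, a regular sequence of positive-degree forms is algebraically independent over $\bk$ — the graded subring it generates has Krull dimension $s$ (a fibre-dimension count against $S$) and is generated by $s$ elements, hence is a polynomial ring — so $S_0$ is a graded polynomial algebra on $\le n$ variables of degrees $\le d$. Second, $S$ is flat over $S_0=\bk[y_1,\dots,y_s]$ since $g_1,\dots,g_s$ is a regular sequence on $S$ (the graded version of the local criterion for flatness). Third, reading the same matrix $\phi$ over $S_0$ (legitimate, as its entries lie in $S_0$) with the same degree shifts gives a map $\phi_0$, and setting $M_0:=\operatorname{coker}\phi_0$ and using right-exactness of $S\otimes_{S_0}-$ gives $S\otimes_{S_0}M_0=\operatorname{coker}(\phi_0\otimes_{S_0}S)=\operatorname{coker}\phi=M$.

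For the structural fact I would run the Ananyan--Hochster reduction. Recall that a form $f$ of degree $e\ge 2$ has \emph{strength} $<k$ if $f=\sum_{i=1}^{k}a_ib_i$ with the $a_i,b_i$ homogeneous of positive degree $<e$ (linear forms have infinite strength, reducible forms have strength $0$), and a tuple of forms is \emph{$k$-collectively strong} if every nonzero homogeneous $\bk$-linear combination of equal-degree members has strength $\ge k$. Two moves are available: if some member has strength $<k$, replace it by the $\le 2k$ lower-degree forms $a_i,b_i$ witnessing this; if some nonzero $\bk$-linear combination of equal-degree members has strength $<k$, first change coordinates among those members so that the combination is itself a member, then collapse it as before. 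A coordinate change among equal-degree members leaves the generated subring unchanged, and a collapse strictly decreases the degree profile of the tuple — the list of how many members have each degree, compared lexicographically from the top — since a top-degree member is always removed; as that order is a well-order, the process halts, every original member lies in the subring generated by the final tuple, and that tuple is $k$-collectively strong. A crude count bounds its size $s$ by an explicit function of $(c,d_0,k)$. The remaining input is the theorem of \cite{ananyan-hochster} that for each degree bound there is a threshold $\eta_{d_0}$ such that every $k$-collectively strong tuple of $s$ forms of degree $\le d_0$ with $k\ge\eta_{d_0}(s)$ is automatically a regular sequence (indeed generates a prime ideal, necessarily of height $s$).

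The real difficulty — and what I expect to be the main obstacle — is choosing $k$ consistently: the size $s$ produced by the reduction grows with the target strength $k$, while the threshold $\eta_{d_0}(s)$ needed to conclude ``regular sequence'' grows with $s$, so one is naively hunting for a fixed point of an increasing, unbounded function. The resolution, the technical heart of \cite{ananyan-hochster}, is a single induction on the degree bound $d_0$: members of the current top degree are only ever destroyed by a collapse, never created, so their count never exceeds $c$, while the lower-degree forms that are spun off are controlled by the already-known degree-$(d_0-1)$ case, and the threshold functions $\eta_e$ are defined by a recursion arranged so that the bounds close up. Pushing this bookkeeping through — together with the harmless reduction to an infinite base field (Betti tables are unchanged under flat extension), needed for the Bertini/genericity arguments underlying $\eta_{d_0}$, and a descent of $S_0$ back to $\bk$ at the end — is where the work lies; everything else is formal. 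I would also keep in mind an alternative route that dissolves this quantitative circularity altogether: work inside an ultraproduct of polynomial rings in unboundedly many variables, carry out the strength reduction there (termination now coming from $\GL$-equivariant noetherianity rather than an explicit bound), and recover the finite $n$ and $d$ by a compactness argument at the end.
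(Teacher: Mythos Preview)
Your proposal is correct and follows essentially the same route the paper indicates: the paper does not give its own proof of this theorem but simply notes that it follows by applying \cite[Theorem~B]{ananyan-hochster} to the vector space spanned by the entries of a presentation matrix of $M$ (with \cite[\S6.6]{imperfect} handling arbitrary base fields), and mentions the ultraproduct approach of \cite[Theorem~5.14]{stillman} as an alternative---both of which you have sketched. Your write-up supplies considerably more detail on the strength reduction and the fixed-point issue than the paper does, but the underlying strategy is the same.
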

Over an algebraically closed field, this result follows by applying~\cite[Theorem~B]{ananyan-hochster} to the vector space spanned by the polynomials in a presentation matrix of $M$; see~\cite[\S6.6]{imperfect} for the case of an arbitrary field.  As suggested, the theorem allows one to control Betti tables, via standard noetherianity arguments over $S_0$.  Alternately, one could adapt~\cite[Theorem~5.14]{stillman}.  The precise statement is:

\begin{corollary}
Let $\bb$ and $\bb'$ be given. Then there exists a finite set $\cB=\cB(\bb,\bb')$ of tables such that if $S$ and $M$ are as in the theorem then $\beta(M) \in \cB$. In other words, given the first two columns of $\beta(M)$, there are only finitely many possibilities for the whole table.
\end{corollary}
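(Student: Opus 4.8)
The plan is to deduce the corollary from Theorem~\ref{thm:ah} by the standard Noetherianity argument over $S_0$ alluded to above. First apply Theorem~\ref{thm:ah} to the pair $(\bb,\bb')$ to obtain integers $n$ and $d$. Given $S$ and $M$ as in the theorem, one gets a graded polynomial algebra $S_0$ with at most $n$ variables of degrees at most $d$, a flat homomorphism $S_0\to S$, and a graded $S_0$-module $M_0$ with $M\cong S\otimes_{S_0}M_0$; since Betti tables are invariant under flat extension of scalars, $\beta(M)=\beta(M_0)$, so in particular the first two columns of $\beta(M_0)$ are again $\bb$ and $\bb'$. There are only finitely many graded polynomial algebras $S_0$ of the allowed shape up to isomorphism (a multiset of at most $n$ variable-degrees drawn from $\{1,\dots,d\}$), so it suffices to prove: for each fixed such $S_0$, only finitely many Betti tables occur among the finitely generated graded $S_0$-modules whose first two Betti columns are $\bb$ and $\bb'$.

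Fix $S_0$. The column $\bb$ determines a graded free $S_0$-module $F$, the minimal generators of any such $M_0$ lying in the recorded degrees, and $\bb'$ determines a graded free module $G$ surjecting onto $K:=\ker(F\to M_0)$, whose minimal generators sit in the degrees recorded by $\beta_{1,\bullet}(M_0)=\bb'$. Composing a surjection $G\to K$ with the inclusion $K\subseteq F$ shows that every such $M_0$ is isomorphic to $\operatorname{coker}(\phi)$ for some degree-preserving homomorphism $\phi\colon G\to F$, and the set $V$ of such $\phi$ is a finite-dimensional affine space over $\bk$. So it is enough to show that $\beta(\operatorname{coker}\phi)$ takes only finitely many values as $\phi$ ranges over $V$. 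For this I would form the universal cokernel $\mathcal F$, a finitely generated graded module over $S_0\otimes_{\bk}\mathcal O_V$, and argue by Noetherian induction on closed subschemes of $V$: we may assume $V$ is reduced and irreducible with generic point $\eta$; by generic flatness, shrink to a dense open $U\ni\eta$ over which $\mathcal F$ is flat; then the minimal graded free resolution of $\mathcal F_\eta$ (of length at most $n$, by Hilbert's syzygy theorem) spreads out over $U$, with constant graded ranks and with minimality---the differentials having entries in the homogeneous maximal ideal---a closed condition that holds at $\eta$, hence on $U$; so $\beta(\operatorname{coker}\phi)$ is constant on $U$ and one finishes by induction on the proper closed complement $V\setminus U$. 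Taking the union over the finitely many $S_0$ yields the required finite set $\cB(\bb,\bb')$.

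If one prefers not to spread out resolutions, the same finiteness over $V$ follows from explicit bounds: $\pd_{S_0}(\operatorname{coker}\phi)\le n$ bounds the number of nonzero columns; a classical Castelnuovo--Mumford regularity bound, in terms of the number of variables and the degrees occurring in $F$ and $G$, bounds the number of nonzero rows; and, as $\operatorname{coker}\phi$ is a quotient of the fixed module $F$, its Hilbert function---hence the dimension of each graded piece of each $\Tor_i^{S_0}(\operatorname{coker}\phi,\bk)$---is bounded independently of $\phi$; so all the tables sit in a fixed finite rectangle with uniformly bounded entries. Either way, the one genuinely substantive point is this finiteness statement over $V$: in the resolution approach it is the spreading-out of the minimal free resolution across the family (via generic flatness), and in the bounds approach it is the import of a uniform regularity bound; the reduction via Theorem~\ref{thm:ah}, the parametrization by $\phi$, and the flat-base-change invariance of Betti tables are all routine.
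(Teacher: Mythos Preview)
Your proposal is correct and takes exactly the approach the paper indicates: the paper does not spell out a proof but simply says the corollary follows from Theorem~\ref{thm:ah} ``via standard noetherianity arguments over $S_0$,'' and you have supplied precisely such an argument (reduce to finitely many $S_0$, parametrize presentations by a finite-dimensional affine space $V$, then run noetherian induction using generic flatness to spread out the minimal free resolution from the generic fibre). Your alternative route via explicit projective-dimension and regularity bounds is in the same spirit.
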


Stillman's conjecture asserts that the projective dimension of a module over a standard-graded polynomial ring can be bounded from the first two columns of its Betti table. It therefore follows immediately from the corollary. We emphasize that the main point of the corollary and Stillman's conjecture is that the bounds are independent of the number of variables in the polynomial ring, and depend only on the number and degrees of generators and relations of the module.

\subsection{Main results}
We establish analogs of the above results in the setting of coherent sheaves. 
As noted above, this is a natural setting to seek such analogs, due to the duality in Boij--S\"oderberg theory~\cite{eisenbud-schreyer-jams, eisenbud-schreyer-icm,eisenbud-erman}.  (Koszul duality is another natural setting to ask about such analogs, but McCullough provided a negative result in that context~\cite{mccullough-exterior}.)

We begin with a definition. Let $X$ be a projective variety over $\bk$ equipped with an ample line bundle $\cO_X(1)$, and let $\cE$ be a coherent sheaf on $X$. Define $\gamma_{i,j}(\cE) = \dim_{\bk} \rH^i(X, \cE(j))$. These numbers are arranged into a table---the {\bf cohomology table} of $\cE$---as follows:
\begin{displaymath}
\gamma(\cE) = \begin{array}{ccccccc}
\hline
\cdots & \gamma_{n,-n-2}& \gamma_{n,-n-1}&\gamma_{n,-n} &\gamma_{n,1-n}&\gamma_{n,2-n}&\cdots\\
\cdots & \gamma_{n-1,-n-1}& \gamma_{n-1,-n}&\gamma_{n-1,1-n} &\gamma_{n-1,2-n}&\gamma_{n-1,3-n}&\cdots\\
& \vdots && \vdots&& \vdots \\
\cdots & \gamma_{1,-3}& \gamma_{1,-2}&\gamma_{1,-1} &\gamma_{1,0}&\gamma_{1,1}&\cdots\\
\cdots & \gamma_{0,-2}& \gamma_{0,-1}&\gamma_{0,0} &\gamma_{0,1}&\gamma_{0,2}&\cdots\\\hline
\end{array}
\end{displaymath}
Unlike Betti tables, cohomology tables extend infinitely to the left and right. The number of non-zero rows is bounded by the dimension of the support of $\cE$. We let $\gamma^k(\cE)$ denote the $k$th column of $\gamma(\cE)$, i.e., the sequence $(\gamma_{0,k}(\cE),\gamma_{1,k-1}(\cE),\dots, \gamma_{i,k-i}(\cE),\dots)$.

Cohomology tables are invariant under finite pushforwards: that is, if $(Y, \cO_Y(1))$ is a second projective variety and $i \colon X \to Y$ is a finite morphism such that $i^*(\cO_Y(1))=\cO_X(1)$ then $\gamma(i_*(\cE))=\gamma(\cE)$. Thus, to control the cohomology table of $\cE$, one could try to realize $\cE$ as a pushforward from a simpler variety. This is exactly our main result:

\begin{theorem} \label{mainthm}
Let $\bb$ and $\bb'$ be column vectors. Then there exists an integer $N=N(\bb,\bb')$ with the following property. Suppose that $\cE$ is a coherent sheaf on $\bP^n$, for some $n$, such that the zeroth and first columns of $\gamma(\cE)$ are $\bb$ and $\bb'$. Then there exists a linear embedding $i \colon \bP^{n_0} \to \bP^n$ with $n_0 \le N$ and a coherent sheaf $\cE_0$ on $\bP^{n_0}$ such that $\cE \cong i_*(\cE_0)$.
\end{theorem}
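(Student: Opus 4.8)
The plan is to pass through the BGG correspondence, which converts $\cE$ into a single matrix over an exterior algebra; fixing $\bb$ and $\bb'$ will pin down the shape of that matrix so that it becomes a point of the value of a polynomial functor, after which Draisma's $\GL$-noetherianity theorem will produce the bound $N$ uniformly in $n$. First recall the theory of Tate resolutions: writing $\bP^n=\bP(V)$ and $E=\Lambda V$, to a coherent sheaf $\cE$ one attaches its Tate resolution $T(\cE)$, a minimal doubly infinite acyclic complex of finitely generated free graded $E$-modules with $T^e(\cE)=\bigoplus_j \rH^j(\bP^n,\cE(e-j))\otimes_{\bk}\omega_E(j-e)$. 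Thus $T^e(\cE)$, as a graded free module, records exactly the column $\gamma^e(\cE)$; in particular $T^0$ and $T^1$ depend only on $\bb$ and $\bb'$, not on $n$ or $\cE$, and the whole complex---hence $\cE$ up to isomorphism---is recovered from the single differential $d^0\colon T^1\to T^0$ by resolving its kernel and co-resolving its cokernel. Since $\operatorname{Hom}_E(\omega_E(a),\omega_E(b))_0\cong\bigwedge^{b-a}V$, a degree-zero map $T^1\to T^0$ is a point of the affine space $\Phi(V):=\operatorname{Hom}_E(T^1,T^0)_0\cong\bigoplus_{i,j}\operatorname{Hom}_{\bk}(\bk^{\bb'_j},\bk^{\bb_i})\otimes\bigwedge^{i-j+1}V$ (only finitely many pairs $(i,j)$ contribute), a finite sum of exterior powers with multiplicities and degrees depending only on $\bb,\bb'$. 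So $\Phi$ is the value on $V$ of a fixed polynomial functor.

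Second, translate the descent condition. A linear embedding $i\colon\bP^{n_0}\hookrightarrow\bP^n$ is the datum of a subspace $V'\subseteq V$ with $\dim V'=n_0+1$, and $\cE\cong i_*(\cE_0)$ for some coherent $\cE_0$ on $\bP^{n_0}$ if and only if $\cE$ is annihilated by the ideal sheaf of $\bP(V')$. Using that $E$ is free over $\Lambda V'$, that minimality is preserved under extension of scalars along $\Lambda V'\subseteq E$, and the compatibility of Tate resolutions with pushforward along $i$, one checks that this happens precisely when $d^0$ lies in the subspace $\Phi(V')\subseteq\Phi(V)$ induced by $\bigwedge^\bullet V'\subseteq\bigwedge^\bullet V$. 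Hence the theorem is equivalent to: there is an $N$ such that, for every $V$, every $d^0\in\Phi(V)$ arising as the Tate differential of a coherent sheaf with columns $\bb,\bb'$ already lies in $\Phi(V')$ for some $V'\subseteq V$ with $\dim V'\le N+1$.

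Third, run the noetherianity argument. For each $\ell$ the ``level $\le\ell$'' locus $\Phi_{\le\ell}(V):=\bigcup_{\dim V'\le\ell}\Phi(V')\subseteq\Phi(V)$ is closed and $\GL(V)$-stable (it is the image of an incidence variety over a Grassmannian), and the ``valid'' locus $\mathcal{V}(V)\subseteq\Phi(V)$ of Tate differentials of coherent sheaves with columns $\bb,\bb'$ is constructible and $\GL(V)$-stable. Passing to the inverse limit over the surjections $\bk^{m+1}\twoheadrightarrow\bk^m$ gives the limit $\Phi_\infty=\varprojlim_m\Phi(\bk^m)$ of the polynomial functor, the closed subsets $C_\ell:=\varprojlim_m\Phi_{\le\ell}(\bk^m)$, and a $\GL$-stable subset whose closure we call $\mathcal{V}_\infty$. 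Since $C_\ell\subseteq C_{\ell+1}$, the sets $\overline{\mathcal{V}_\infty\setminus C_\ell}$ form a descending chain of $\GL$-stable closed subsets of $\Phi_\infty$, which by Draisma's topological noetherianity of polynomial functors stabilizes, say for $\ell\ge N$. If the stable value is empty then $\mathcal{V}_\infty\subseteq C_N$, and unwinding the inverse limit shows every Tate differential of a coherent sheaf with columns $\bb,\bb'$ has level $\le N$, which is the theorem.

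The main obstacle is exactly to show this stable value is empty, i.e.\ to exclude a ``limiting counterexample'': a point of $\mathcal{V}_\infty$ of unbounded level, which one should think of as the Tate differential of a coherent sheaf on an infinite-dimensional projective space having the \emph{finite} cohomology columns $\bb$ and $\bb'$. Heuristically no such object exists, since $\bb,\bb'$ already determine $T^0$ and $T^1$ and a valid object ought to be forced onto a bounded-dimensional linear subspace; but making this rigorous is where the work lies. Concretely one would want to prove that $\mathcal{V}_\infty$ is already closed and that, by a spreading-out and constructibility analysis of the family of Tate resolutions with prescribed two-term truncation, each of its points genuinely arises from a coherent sheaf on some $\bP^m$ with $m$ bounded in terms of $\bb,\bb'$. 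Everything else---the identity for $T^e(\cE)$, the compatibility with linear pushforwards, and the formalism of limits of polynomial functors---is bookkeeping. (Alternatively one can replace the inverse limit by an ultraproduct of the exterior algebras $\Lambda(\bk^{n+1})$: the theorem of \L o\'s then transports the hypothesis automatically to the resulting ``big exterior algebra'' and Draisma's theorem still supplies noetherianity, but the same finiteness of limiting valid objects must still be established.)
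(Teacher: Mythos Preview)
Your setup through the BGG correspondence and the parameter space $\Phi(V)$ matches the paper's, and your translation of the descent condition via Tate resolutions is correct. But the noetherianity argument you propose has a structural flaw that is more serious than the gap you acknowledge. Stabilization of the descending chain $\overline{\mathcal{V}_\infty\setminus C_\ell}$ does \emph{not} yield emptiness: if $\mathcal{V}_\infty$ is irreducible and each $C_\ell$ is a proper closed subset, then each $\mathcal{V}_\infty\setminus C_\ell$ is dense, so every term of your chain equals $\mathcal{V}_\infty$ and the chain stabilizes trivially without ever shrinking. Knowing that every individual point has finite level (which is automatic, since a sheaf on $\bP^n$ has level $\le n+1$) does not help, because the intersection of the open complements can be empty while every closure is the whole space. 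Thus your chain gives no information, and the ``remaining work'' you flag is in fact the entire theorem.

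The paper avoids this by choosing a different descending chain: $Z_i=\{x:\reg(\cE_x)>i\}$. The crucial point---absent from your sketch---is that these $Z_i$ are \emph{closed} (not merely constructible), which in turn rests on the observation that the dual $\phi^\vee$ of a Tate differential always has $\reg(\coker\phi^\vee)=0$ (the ``built-in floor'' of Lemma~\ref{lem:reg bounds}); this is exactly what circumvents McCullough's counterexamples and makes the semicontinuity of the relevant Betti numbers go through. Now Serre vanishing gives $\bigcap_i Z_i=\emptyset$ pointwise, and since the $Z_i$ themselves (not closures of complements) form the chain, stabilization forces $Z_r=\emptyset$. This yields a uniform regularity bound $k_0$. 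Only then does the paper bound the level: past degree $k_0$ the Tate differential is a linear matrix whose size is controlled by iterating a finiteness result on $\beta_{2,k}$, and a linear matrix of bounded size uses boundedly many variables. Your approach tries to bound the level directly and never engages with regularity or with the asymmetry between $\phi$ and $\phi^\vee$; that is the missing idea.
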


Note that the conclusion of this theorem is stronger than that of Theorem~\ref{thm:ah} as it realizes the sheaf as the pushforward under a \emph{linear} embedding. 
As before, this theorem allows us to control cohomology tables:

\begin{corollary} \label{cor:finite cohom tables}
Let $\bb$ and $\bb'$ be given. Then there exists a finite set $\Gamma=\Gamma(\bb,\bb')$ of tables such that if $\cE$ is as in the theorem then $\gamma(\cE) \in \Gamma$. In other words, given the zeroth and first columns of $\gamma(\cE)$, there are only finitely many possibilities for the whole table.
\end{corollary}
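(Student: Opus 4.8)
The plan is to combine Theorem~\ref{mainthm} with a boundedness argument, passing through the exterior algebra so that the hypothesis on the zeroth and first columns of $\gamma(\cE)$ becomes an honest finiteness constraint.

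First I would use Theorem~\ref{mainthm} to reduce to a fixed projective space. Given $\bb$ and $\bb'$, let $N=N(\bb,\bb')$ be as in that theorem; then any $\cE$ as in the statement is $i_*(\cE_0)$ for a linear embedding $i\colon\bP^{n_0}\hookrightarrow\bP^n$ with $n_0\le N$, and since cohomology tables are invariant under finite pushforward we get $\gamma(\cE)=\gamma(\cE_0)$, and in particular the zeroth and first columns of $\gamma(\cE_0)$ are again $\bb$ and $\bb'$. As $n_0$ takes only finitely many values, it is enough to fix $m\le N$ and prove that $\{\gamma(\cF) : \cF\in\operatorname{Coh}(\bP^m),\ \gamma^0(\cF)=\bb,\ \gamma^1(\cF)=\bb'\}$ is finite.

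Next I would translate this via the BGG correspondence. Let $E$ be the exterior algebra on $m+1$ variables, a fixed finite-dimensional $\bk$-algebra, and let $\mathbf{T}^\bullet(\cF)$ be the Tate resolution of $\cF$: a minimal acyclic doubly infinite complex of finitely generated graded free $E$-modules whose $k$th term is a direct sum of copies of $E$ with grading shifts recording the numbers $\rH^a(\bP^m,\cF(k-a))$ --- in other words, the $k$th term of $\mathbf{T}^\bullet(\cF)$ encodes precisely the column $\gamma^k(\cF)$. Hence fixing $\gamma^0(\cF)$ and $\gamma^1(\cF)$ fixes the two consecutive graded free modules $\mathbf{T}^0=\mathbf{T}^0(\cF)$ and $\mathbf{T}^1=\mathbf{T}^1(\cF)$, and the rest of the complex is recovered from the single differential $d\colon\mathbf{T}^0\to\mathbf{T}^1$: minimally (co)resolving over $E$ --- which is legitimate because $E$ is self-injective --- the cokernel $C=\operatorname{coker}(d)$ produces the forward half of $\mathbf{T}^\bullet(\cF)$ and the kernel $K=\ker(d)$ produces the backward half. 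Since $C$ is a quotient of the fixed finite-dimensional $E$-module $\mathbf{T}^1$ and $K$ is a submodule of the fixed finite-dimensional $E$-module $\mathbf{T}^0$, both $C$ and $K$ range over bounded families of graded $E$-modules, parametrized by a finite-type $\bk$-scheme (one may take the finite-dimensional $\bk$-vector space of degree-preserving $E$-linear maps $\mathbf{T}^0\to\mathbf{T}^1$). We are thus reduced to the following: a bounded family of finitely generated graded modules over the fixed finite-dimensional algebra $E$ realizes only finitely many $E$-Betti tables.

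This last point is where the real work lies, and I expect it to be the main obstacle. Each individual graded Betti number $\beta_{i,j}^E$ is upper semicontinuous on the parameter scheme and hence takes only finitely many values there; the difficulty is that resolutions over $E$ are infinite in homological degree, so one must control the whole tail uniformly. The strategy is: first bound the $E$-regularity of the modules in the family (equivalently, bound the Castelnuovo--Mumford regularity of $\cF$ and of its Serre dual in terms of $\bb$ and $\bb'$), using the structure theory of resolutions over the exterior algebra together with Noetherianity on the relevant Quot scheme over $E$, which is an honest projective scheme since $E$ is finite-dimensional; second, observe that past this regularity bound the resolution is governed by a Hilbert-polynomial-type numerical invariant of degree $\le m$ satisfying uniform bounds (hence lying in a finite set), while only a uniformly bounded number of transient terms remain, each a uniformly bounded tuple of nonnegative integers. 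Since each of these finitely many pieces of data is a constructible function of $d$, it assumes only finitely many values, and therefore so does $\gamma(\cF)$. This proves the corollary.
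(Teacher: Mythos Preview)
Your reduction to a fixed $\bP^m$ via Theorem~\ref{mainthm} is exactly what the paper does. From that point on, though, the paper takes a shorter and more direct route than you do.

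You translate back through BGG and try to control Betti tables of $E$-modules in a bounded family; as you say, the crux is then to bound the $E$-regularity of $\coker(d)$ and $\ker(d)$ uniformly, after which the tails are polynomial and only finitely many transient columns remain. This is workable, but your justification of the regularity bound (``structure theory of resolutions over the exterior algebra together with Noetherianity on the relevant Quot scheme over $E$'') is a placeholder rather than an argument, and filling it in would essentially mean reproving the paper's Lemma~\ref{lem:cohom tables noetherian} in exterior-algebra language.

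The paper instead stays on the sheaf side. Once $m$ is fixed, the relevant sheaves form a family $\cF$ on $V\times\bP^m$ with $V=X_{\bd,m}$ reduced and noetherian, and the paper proves a general fact (Corollary~\ref{cor:finite cohom noeth}): any coherent family over a reduced noetherian base realizes only finitely many cohomology tables. The proof (Lemma~\ref{lem:cohom tables noetherian}) is classical algebraic geometry: generic flatness and semicontinuity of regularity control the forward tail via the Hilbert polynomial; for the backward tail one applies the same reasoning to the sheaves $\cE xt^i(\cF,\omega)$ and invokes Serre duality together with the local-to-global Ext spectral sequence; this leaves a finite window of columns, each entry upper semicontinuous, and noetherian induction finishes. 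Your instinct to bound the regularity of $\cF$ \emph{and of its Serre dual} is exactly the right one --- that is precisely what the $\cE xt$-sheaf argument accomplishes --- but doing it directly for sheaf cohomology, rather than for $E$-resolutions, turns the step you flagged as ``the main obstacle'' into a routine exercise. The detour back through the exterior algebra buys nothing here.
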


As a special case of this corollary, we obtain a Stillman-like conjecture for sheaves:

\begin{corollary} \label{cor:regbd}
Let $\bb$ and $\bb'$ be given. Then there exists an integer $r=r(\bb,\bb')$ with the following property: if $\cE$ is as in the theorem then the Castelnuovo--Mumford regularity of $\cE$ is at most $r$.
\end{corollary}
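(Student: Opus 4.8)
The plan is to deduce Corollary~\ref{cor:regbd} formally from Corollary~\ref{cor:finite cohom tables}, using only the classical fact that the Castelnuovo--Mumford regularity of a coherent sheaf on $\bP^n$ is a function of its cohomology table. Recall (Mumford) that $\cE$ is called $m$-regular if $\rH^i(\bP^n,\cE(m-i))=0$ for all $i\ge 1$, that $m$-regular implies $(m+1)$-regular, and that $\operatorname{reg}(\cE)$ is the least $m$ for which $\cE$ is $m$-regular. This least $m$ exists because, by Serre vanishing together with the fact that only finitely many rows of $\gamma(\cE)$ are nonzero (the number of such rows being at most $\dim\operatorname{supp}\cE$), the sheaf $\cE(m-i)$ has vanishing higher cohomology for all $i\ge 1$ once $m\gg 0$. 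Consequently $\operatorname{reg}(\cE)$ is determined by, and is a finite-valued function of, the entries $\gamma_{i,j}(\cE)$ with $i\ge 1$.

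Granting this, the argument is short. By Corollary~\ref{cor:finite cohom tables} applied to the pair $(\bb,\bb')$, the table $\gamma(\cE)$ belongs to a finite set $\Gamma=\Gamma(\bb,\bb')$ of tables; discarding those members of $\Gamma$ that are not the cohomology table of any coherent sheaf (the remaining set is nonempty, as it contains $\gamma(\cE)$), we may assume each $\gamma\in\Gamma$ has a well-defined finite regularity $\operatorname{reg}(\gamma)$ in the sense of the previous paragraph. I would then set $r=r(\bb,\bb'):=\max_{\gamma\in\Gamma}\operatorname{reg}(\gamma)$. Since $\gamma(\cE)\in\Gamma$, this gives $\operatorname{reg}(\cE)=\operatorname{reg}(\gamma(\cE))\le r$, which is exactly the claimed bound.

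I do not anticipate any real obstacle, since all of the content is already packaged in Theorem~\ref{mainthm} and its Corollary~\ref{cor:finite cohom tables}; the present statement is a formal consequence, and the only step requiring even a moment's thought is the standard reduction of regularity to the cohomology table recalled above. For completeness one could instead work directly from Theorem~\ref{mainthm}: writing $\cE\cong i_*(\cE_0)$ for a linear embedding $i\colon\bP^{n_0}\hookrightarrow\bP^n$ with $n_0\le N$, invariance of cohomology tables under finite pushforwards yields $\gamma(\cE)=\gamma(\cE_0)$ and hence $\operatorname{reg}(\cE)=\operatorname{reg}(\cE_0)$, reducing the problem to a regularity bound for sheaves on the bounded-dimensional space $\bP^{n_0}$ with prescribed zeroth and first cohomology columns; but that bound is again the substance of Corollary~\ref{cor:finite cohom tables}, so routing through that corollary is the cleanest path.
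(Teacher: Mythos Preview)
Your argument is circular. In this paper the logical order is
\[
\text{Corollary~\ref{cor:regbd}} \;\Longrightarrow\; \text{Theorem~\ref{mainthm}} \;\Longrightarrow\; \text{Corollary~\ref{cor:finite cohom tables}},
\]
not the other way around. The paper states this explicitly right after Corollary~\ref{cor:regbd} (``In fact, we prove the above corollary first, and use it in our proof of Theorem~\ref{mainthm}''), and you can see it in the proofs in \S\ref{sec:proof}: the proof of Theorem~\ref{mainthm} opens by invoking the regularity bound $k_0$ established in the proof of Corollary~\ref{cor:regbd}, and the proof of Corollary~\ref{cor:finite cohom tables} begins ``By Theorem~\ref{mainthm}, it suffices to prove the corollary for a fixed $n$.'' So neither of your two routes---through Corollary~\ref{cor:finite cohom tables} or directly through Theorem~\ref{mainthm}---is available at this point.

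The paper's actual proof of Corollary~\ref{cor:regbd} is a direct noetherianity argument on the parameter space $X_{\bd}^0$: one defines the closed loci $Z_i$ of points whose sheaf has regularity $>i$ (Corollary~\ref{cor:above zero row}), observes $Z_2 \supseteq Z_3 \supseteq \cdots$, uses $\GL$-noetherianity (Draisma) to conclude the chain stabilizes, and then notes that since every individual sheaf has finite regularity the stable value must be $\emptyset$. Your deduction ``finite set of tables $\Rightarrow$ bounded regularity'' is of course correct as an implication, but here it begs the question.
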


In fact, we prove the above corollary first, and use it in our proof of Theorem~\ref{mainthm}.

\begin{remark}
In Theorem~\ref{mainthm} we fixed the 0th and 1st columns of $\beta(\cE)$. In fact, one could fix any two consecutive columns. This follows from Theorem~\ref{mainthm} and the fact that the cohomology table of a twist $\cE(n)$ is obtained from that of $\cE$ by shifting all columns to the left by $n$. For Betti tables, however, one must use the first two columns (see Remark~\ref{rmk:Betti}).
\end{remark}

\subsection{Overview of proof}

The basic idea is to define an infinite dimensional parameter space for the sheaves $\cE$ under consideration, and use Draisma's $\GL$-noetherianity result~\cite{draisma} to control this space. We provide some more details.

First, a coherent sheaf $\cE$ on projective space can be encoded by its associated Tate resolution $\bT(\cE)$, which is a doubly infinite, everywhere exact complex over an exterior algebra.  This Tate resolution can, in turn, be encoded by any single differential.  This is reviewed in~\S\ref{sec:background}.

The initial data $\bb, \bb'$ determines the degrees of the entries of the 0th differential of the Tate resolution. We use this in \S\ref{ss:parameter} to define a parameter space for coherent sheaves with these columns in their cohomology table. This parameter space, which is infinite dimensional since we do not a priori fix the number of variables, is contained in a finite direct sum of exterior powers.

Inside of our parameter space, we define $Z_i$ to be the locus where the regularity of the corresponding sheaf is strictly bigger than $i$. In Corollary~\ref{cor:above zero row}, we show that $Z_i$ is Zariski closed.  This is one of the main technical points in the paper, and where we diverge from McCullough's counterexamples \cite{mccullough-exterior}.  If we were to study the analogous loci in the context of~\cite{mccullough-exterior} then the result would fail.  As McCullough explicitly shows, for a $1\times 1$ matrix consisting of a single quadratic form in an exterior algebra, the locus in $\bigwedge^2 \bk^\infty$ where the regularity of the cokernel has regularity at most $i$ is actually Zariski open, not closed.  The key fact that enables us to prove that $Z_i$ is Zariski closed in our case is that the matrices that arise in a Tate resolution are not generic, and so our ambient parameter space is a very specific sublocus inside of a direct sum of exterior powers; see Lemma~\ref{lem:reg bounds}.

We then have a chain of closed subsets $Z_1 \supseteq Z_2 \supseteq \cdots$. Draisma's noetherianity result~\cite{draisma} implies that $Z_r = Z_{r+1}$ for $r \gg 0$. Since any coherent sheaf on a projective space has finite regularity, we see that $Z_r=\emptyset$ for $r \gg 0$.  This gives a bound on regularity, yielding Corollary~\ref{cor:regbd}.

We then deduce Theorem~\ref{mainthm} from Corollary~\ref{cor:regbd}. For this, we use the fact that differentials beyond the regularity of a coherent sheaf in the Tate resolution are linear maps.  (This is shown in~\cite{efs}, and reviewed in \S\ref{sec:background}).  Using the regularity bound above, we can examine a fixed differential for all of our coherent sheaves.  Repeated applications of Corollary~\ref{cor:2nd column} (which also relies on Draisma's noetherianity result) show that the possible sizes of this differential come from a finite list, and hence there is a maximal number of linear forms used in this fixed differential.  This yields Theorem~\ref{mainthm}.

\subsection{Open questions}

Our work naturally suggested several further questions. First, can one prove Theorem~\ref{mainthm} via more elementary methods, e.g. without appealing to infinite dimensional noetherianity results? We have been able to handle some special cases directly, such as when the sheaf $\cE$ is the structure sheaf of a smooth subvariety, which suggests that this indeed may be possible. Second, could one prove Theorem~\ref{mainthm} via an ultraproduct argument, similar to what is done in ~\cite{stillman}?  Finally, it would be interesting to better understand how our result and that of Ananyan--Hochster interact with the duality between Betti tables and cohomology tables appearing in Boij--S\"oderberg theory.

\subsection{Outline}

In \S\ref{sec:background} we review the necessary material, with a specific emphasis on the BGG correspondence and on properties of Tate resolutions.  In \S\ref{sec:parameter}, we construct the parameter spaces $X^0_{\bd}$ which parametrize the coherent sheaves $\cE$ with $\gamma^k(\cE)=\bb$ and $\gamma^{k+1}(\cE)=\bb'$.  We also prove a number of key semicontinuity results about these spaces, including the fact that the space $Z_i$ mentioned above is Zariski closed. Finally, \S\ref{sec:proof} contains the proofs of Theorem~\ref{mainthm} and Corollary~\ref{cor:finite cohom tables}.

\subsection*{Acknowledgments}

We thank Jason McCullough for useful conversations.

\section{Background}\label{sec:background}

Throughout, we work over an arbitrary field $\bk$. 

\subsection{The BGG correspondence} \label{sec:BGG}
Here we give an overview of the key results of~\cite{efs} that will be relevant to us.  Let $V_n=\langle x_0,\dots,x_n\rangle$ be a graded vector space of dimension $n+1$ over $\bk$, where $\deg(x_i)=1$, and let $\Sym(V_n)$ be the symmetric algebra on $V_n$.  Let $W_n=\langle e_0,\dots,e_n\rangle$ be the dual graded vector space with corresponding dual basis, where $\deg(e_i)=-1$.  Let $\bigwedge W_n$ be the corresponding exterior algebra.  We often write $S=\Sym(V_n)$ or $E=\bigwedge W_n$ when the number of variables is fixed.

The BGG correspondence provides a correspondence between complexes of $S$-modules and complexes of $E$-modules.  It was introduced in~\cite{beilinson,bgg}, though the treatment developed in~\cite{efs} will be most relevant for our purposes.  There is a functor $\bR$ which transforms a graded $S$-module $M=\bigoplus_d M_d$ into a graded free linear complex $\bR(M)$ of $E$-modules, where $\bR(M)^i = M_i\otimes_{\bk} E(-i)$.  The differential $\partial^i \colon M_i\otimes_{\bk} E(-i)\to M_{i+1}\otimes_{\bk} E(-i-1)$ is determined by the image of the generators $M_i\to M_{i+1}\otimes W_n$ via the formula $\partial(m)=\sum_{i=0}^n (-1)^i x_im\otimes e_i$.  Alternately, one may view the map $M_i\to M_{i+1}\otimes W_n$ as the adjoint of the multiplication map $M_i\otimes V_n\to M_{i+1}$ given by the $S$-module structure on $M$.  There is a similar functor $\bL$ which transforms a graded $E$-module into a graded free linear complex of $S$-modules.  The functors $\bR$ and $\bL$ induce equivalences between the derived categories of $S$-modules and of $E$-modules.

\subsection{Tate resolutions}\label{subsec:tate}
Let $\cE$ be a coherent sheaf on $\bP^n=\Proj(\Sym(V_n))$. Work of Eisenbud, Fl{\o}ystad, and Schreyer shows that $\gamma(\cE)$ can be viewed as the Betti table of a minimal, free, doubly infinite, everywhere exact complex on $E=\bigwedge W_n$, known as a {\bf Tate resolution of $\cE$} and denoted $\bT(\cE)$~\cite[Theorem~4.1]{efs}.  To be more specific, if $\bT(\cE)=[\cdots \to \bT(\cE)^k \to \bT(\cE)^{k+1}\to \cdots]$ is a Tate resolution for $\cE$, then in cohomological degree $k$ we have the free module
\begin{equation}\label{eqn:Tate}
\bT(\cE)^k 
= \bigoplus_{i+j=k} E(-j)^{\gamma_{i,j}(\cE)}.
\end{equation}
The graded Betti numbers of the free module $\bT(\cE)^k$ are thus given by the entries of the cohomology table in a single column. 

\begin{example}\label{ex:cubic tate}
Let $C\subseteq \bP^2$ be a cubic curve. The cohomology table of its structure sheaf is
\begin{displaymath}
\gamma(\cO_{C}) = \begin{array}{c|ccccccc}
&& \gamma^{-2}(\cO_{C})&\gamma^{-1}(\cO_{C})&\gamma^0(\cO_{C}) &\gamma^1(\cO_{C}) &\gamma^2(\cO_{C}) &\\ \hline
2&\cdots &0&0&0&0&0&\cdots\\
1&\cdots &9&6&3&1&0&\cdots\\
0&\cdots &0&0&1&3&6&\cdots \\ \hline
\end{array}.
\end{displaymath}
A Tate resolution $\bT(\cO_{C})$ for $\cO_C$ has the form
\[
\cdots \overset{\partial^{-3}}{\longrightarrow} E(3)^9 \overset{\partial^{-2}}{\longrightarrow} E(2)^6 \overset{\partial^{-1}}{\longrightarrow} E(1)^3\oplus E^1 \overset{\partial^{0}}{\longrightarrow} E^1\oplus E(-1)^3 \overset{\partial^{1}}{\longrightarrow} E(-2)^6\to \cdots. \qedhere
\]
\end{example}

By fixing the entries in columns $0$ and $1$ of $\bT(\cE)$ as in Theorem~\ref{mainthm}, we are fixing the degrees of the entries of a matrix representing the differential $\partial^0$ from $\bT(\cE)$.  McCullough's examples in~\cite[Theorem~4.1]{mccullough-exterior} show that this alone is not sufficient to obtain finiteness results about the cokernel of a matrix over $E$. In particular, he shows that for each $\ell\geq 1$, the ideal generated by the single quadric
$e_1 \wedge e_2 + e_3 \wedge e_4+\dots+e_{2\ell-1} \wedge e_{2\ell}$
has regularity $\ell-2$.  Thus, even for a $1\times 1$ matrix $\phi$ containing a single quadric in the exterior algebra, there is no way to bound the regularity of $\coker \phi$ solely in terms of the degrees of the entries of $\phi$ (i.e., without reference to the number of variables).

But the matrices that arise in Tate resolutions turn out to be far from generic. To describe this, we let $\partial^k \colon \bT(\cE)^k\to \bT(\cE)^{k+1}$ be one of the differentials in a Tate resolution, and let $\phi^\vee$ be a matrix representing the dual of $\partial^k$, where the duality functor is $\Hom_E(-,E)$.  As the following lemma shows, it is always the case $\coker \phi^\vee$ has regularity $-k$, where we mean regularity as a graded $E$-module.  (This should not be confused with the Castelnuovo--Mumford regularity of the sheaf $\cE$.)  In other words, the Betti table of $\coker \phi^\vee$ has a built-in floor, and so we avoid McCullough's counterexamples.  The lemma follows easily from \cite[Theorem 4.1]{efs}.  But it will play an important role, and so we give an independent statement and proof.

\begin{lemma}\label{lem:reg bounds}
Fix a coherent sheaf $\cE$ on $\bP^n$ and fix some integer $k$.  Let $\partial^k$ denote the differential $\bT(\cE)^k\to \bT(\cE)^{k+1}$ and let $\phi^\vee \colon (\bT(\cE)^{k+1})^* \to (\bT(\cE)^k)^*$ be  a matrix representing the dual of $\partial^k$.  The regularity of $\coker( \phi^\vee)$ as a graded $E$-module is $-k$.  

In particular, if $\bG$ is a minimal free resolution of $\coker ( \phi^\vee)$ then $\bG_{j}$ is generated entirely in degree $-j-k$ for $j\gg 0$. 
\end{lemma}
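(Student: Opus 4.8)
The plan is to recognize $\coker(\phi^\vee)$ as a truncation of the Tate resolution read backwards, and to compute its minimal free resolution directly from $\bT(\cE)$. Concretely, since $\bT(\cE)$ is everywhere exact, the differential $\partial^k \colon \bT(\cE)^k \to \bT(\cE)^{k+1}$ factors as $\bT(\cE)^k \twoheadrightarrow \im(\partial^k) \hookrightarrow \bT(\cE)^{k+1}$, and $\im(\partial^k) = \ker(\partial^{k+1})$. Applying $\Hom_E(-,E)$ to the portion $\bT(\cE)^{k+1} \to \bT(\cE)^{k+2} \to \cdots$ of the Tate resolution, exactness is preserved because $E$ is injective as a module over itself (it is Gorenstein Artinian, so self-injective), and the dualized complex $(\bT(\cE)^{k+2})^* \to (\bT(\cE)^{k+1})^* \to (\bT(\cE)^k)^* \to \cdots$ computes $\coker(\phi^\vee)$ at the tail position $(\bT(\cE)^k)^*$. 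Thus a minimal free resolution $\bG$ of $\coker(\phi^\vee)$ is obtained by splicing in the dual of $\partial^{k-1}, \partial^{k-2}, \ldots$; more precisely $\bG_j$ agrees with $(\bT(\cE)^{k-j})^*$ for $j \ge 1$, up to the usual minimality truncation near $j=0$.

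The next step is to read off the grading. By \eqref{eqn:Tate} we have $\bT(\cE)^{k-j} = \bigoplus_{i'+j'=k-j} E(-j')^{\gamma_{i',j'}(\cE)}$, so its $E$-dual is $\bigoplus_{i'+j'=k-j} E(j')^{\gamma_{i',j'}(\cE)}$, which is generated in degrees $\{-j' : \gamma_{i',j'}(\cE)\neq 0,\ i'+j'=k-j\}$. For a coherent sheaf on $\bP^n$, Serre vanishing and Serre duality force $\gamma_{i',j'}(\cE)=0$ once $j'$ is sufficiently negative (for fixed $i'$) and once $j'$ is sufficiently positive (for fixed $i'$); combined with the finite range of $i'$ (bounded by $\dim\operatorname{supp}\cE$), this shows that for each fixed column index $k-j$ the set of relevant $j'$ is finite and, crucially, that as $j \to \infty$ the \emph{smallest} such $-j'$ — equivalently the largest $j'$ with $\gamma_{i',j'}\neq 0$ on the antidiagonal $i'+j'=k-j$ — stabilizes to $j' = -j-k$. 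This stabilization is exactly the statement of \cite[Theorem~4.1]{efs} that far enough into the Tate resolution the differentials are linear and each free module $\bT(\cE)^{m}$ is generated in the single degree $-m$ (with the $i'=0$ row carrying $\gamma_{0,m}=\operatorname{h}^0(\cE(m))$ for $m \gg 0$). Translating: for $j \gg 0$, $\bT(\cE)^{k-j}$ is generated purely in degree $-(k-j) = j-k$, so its dual $(\bT(\cE)^{k-j})^*$, and hence $\bG_j$, is generated purely in degree $-(j-k) = -j-k$. That the regularity equals exactly $-k$ then follows from the $j=0$ end: $\bG_0 = (\bT(\cE)^k)^*$ is generated in degrees $-j'$ for $j'$ on the antidiagonal $i'+j' = k$, and the largest generator degree there, combined with the linear strands further out, pins the regularity at $-k$; no generator of $\coker(\phi^\vee)$ sits in degree exceeding $-k$ because $\bG_j$ lives in degree $-j-k$, and since the complex is linear past the start, $\max_j(\text{top degree of }\bG_j) - j = -k$.

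The main obstacle is the bookkeeping at the low-$j$ end, i.e. making precise the claim that $\bG$ \emph{is} the dualized Tate resolution up to a controlled truncation, and that minimality is preserved. Two points need care: first, the Tate resolution $\bT(\cE)$ is minimal by construction, and $\Hom_E(-,E)$ sends a minimal complex of free $E$-modules to a minimal complex (because the maximal ideal of $E$ is self-dual under this duality), so the spliced complex is already minimal and no cancellation occurs — this is where we use that $\phi^\vee$ arises from a Tate resolution and not an arbitrary matrix. Second, one must check that $\coker(\phi^\vee)$ has no free summands and that the resolution genuinely starts at the position we claim; this is immediate from exactness of $\bT(\cE)$ at spot $k+1$, which gives $\im(\partial^k) = \ker(\partial^{k+1})$ and hence identifies $\coker(\phi^\vee)$ with $\operatorname{coker}$ of the \emph{injective} map $(\bT(\cE)^{k+1})^* \to (\bT(\cE)^k)^*$ having the expected kernel structure. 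Once these are in place, the grading computation above is routine, and the ``in particular'' clause is just the $j \gg 0$ case already extracted.
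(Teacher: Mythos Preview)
Your overall strategy matches the paper's: identify the minimal free resolution $\bG$ of $\coker(\phi^\vee)$ with a half of the dualized Tate resolution (using that $E$ is self-injective and $\bT(\cE)$ is everywhere exact), then invoke Serre vanishing to see that the far-out terms are linear. However, the execution contains a systematic direction error that makes the argument, as written, incorrect.

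Concretely: from $\phi^\vee\colon (\bT(\cE)^{k+1})^* \to (\bT(\cE)^k)^*$ one has $\bG_0=(\bT(\cE)^k)^*$ and $\bG_1=(\bT(\cE)^{k+1})^*$, and by exactness of the dual complex the resolution continues \emph{to the right} in the Tate indexing, i.e.\ $\bG_j=(\bT(\cE)^{k+j})^*$, not $(\bT(\cE)^{k-j})^*$ as you claim. Your sentence about ``splicing in the dual of $\partial^{k-1},\partial^{k-2},\ldots$'' is exactly backwards. This matters because your subsequent appeal to Serre vanishing and the $i'=0$ row is tailored to $m\gg 0$; with your indexing $m=k-j\to -\infty$, where it is the \emph{top} cohomology row that survives, and $\bT(\cE)^m$ is generated in degree $m-d$ (with $d=\dim\operatorname{supp}\cE$), not $-m$. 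There is also a plain arithmetic slip: $-(j-k)=k-j$, not $-j-k$. The fact that you land on the right final degree $-j-k$ is an artifact of these errors cancelling, not a valid computation.

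Once the index is corrected to $\bG_j=(\bT(\cE)^{k+j})^*$, the argument is short and is exactly the paper's: for $j\gg 0$, Serre vanishing forces $\gamma_{i',j'}(\cE)=0$ for $i'>0$ on the antidiagonal $i'+j'=k+j$, so $\bT(\cE)^{k+j}=E(-(k+j))^{\gamma_{0,k+j}}$ is generated in degree $k+j$, and its dual in degree $-j-k$. The paper also dispenses with your ``low-$j$ bookkeeping'' paragraph by simply noting that the $j\gg 0$ statement already pins the regularity at $-k$; no separate analysis near $j=0$ is needed.
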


\begin{proof}
It suffices to prove that $\bG_{j}$ is generated entirely in degree $-j-k$ for $j\gg 0$, as this will imply that the regularity of $\coker( \phi^\vee)$ is $-k$.  Now, since $\bT(\cE)$ is doubly exact and since $E$ is self-injective, we have that $\bG_{j} = (\bT(\cE)^{k+j})^*$.  It thus suffices to show that $\bT(\cE)^{k+j}$ is generated entirely in degree $k+j$ for fixed $k$ and $j\gg 0$.  This follows by combining \eqref{eqn:Tate} with Serre Vanishing for $\cE$. 
\end{proof}

\subsection{Cone-stable invariants}
We will consider numerical functions $\nu$ whose domain is the set of pairs $(E,M)$ where $E$ is an exterior algebra over $\bk$ generated by finitely many elements all of the same degree, and $M$ is a finitely generated graded $E$-module, and whose target is $\bZ \cup \{\infty\}$. Then $\nu$ is a {\bf module invariant} if $\nu(M)$ only depends on the pair $(E,M)$ up to isomorphism. Let $\bk[\eps]/ (\eps^2)$ be the exterior algebra on 1 generator. We say that a module invariant $\nu$ is {\bf cone-stable} if $\nu(M) = \nu(M \otimes_\bk \bk[\eps] / (\eps^2))$.

Finally, $\nu$ is {\bf weakly upper semi-continuous} if it is upper semi-continuous in flat families. More precisely, given a variety $V$ over $\bk$, let $\cM$ be a flat module over the sheaf of algebras $\cO_V \otimes_\bk E$. We require that the function $x\mapsto \nu(\cM_x)$ is upper semi-continuous, i.e., for all $n$, $\{x \mid \nu(\cM_x) \ge n\}$ is closed.

\section{Parameter spaces and semicontinuity results}\label{sec:parameter}
\subsection{Parameter spaces} \label{ss:parameter}
Since we are interested in the properties of cohomology tables on projective space,  without a priori fixing the number of variables, we will introduce certain limiting topological spaces.  This is similar to the setup in~\cite{genstillman}.

We fix for this section two column vectors $\bb=(\bb_0,\bb_1,\dots)$ and $\bb'=(\bb'_0,\bb'_1,\dots)$, and let $s$ be the maximal index such that $\bb_s$ or $\bb'_s$ is non-zero. To begin, we consider $E=\bigwedge W_n$ for a fixed $n$. The two vectors determine free modules $F=\bigoplus_{i \ge 0} E(i)^{\bb_i}$ and $F' = \bigoplus_{i \ge 0} E(i-1)^{\bb'_i}$. A homogeneous map $F \to F'$ is given by a matrix $\phi$ with entries of specified degrees.  We let $\bd_i$ represent the number of entries of degree $-i$ that could appear in such a matrix, and we write $\bd = (\bd_1,\bd_2,\dots)$.  Note that $\bd$ has only finitely many nonzero entries.  Since the matrices we are interested will arise from minimal free resolutions, there will be no nonzero entries of degree $0$, and thus we do not include a $\bd_0$.  Let $X_{\bd,n}=(\bigwedge^{1}W_n)^{\oplus \bd_1} \oplus (\bigwedge^{2}W_n)^{\oplus \bd_2}\oplus \cdots$ be the corresponding parameter space.  We will identify points $x \in X_{\bd,n}$ with matrices $\phi \colon F\to F'$ and sometimes abuse notation by writing $\phi \in X_{\bd,n}$.  We refer to such a matrix $\phi \colon F\to F'$ as a {\bf matrix of type $(\bb,\bb')$}.  An example will help clarify this setup.  

\begin{example}\label{ex:block matrix}
Following Example~\ref{ex:cubic tate}, let $\bb=(1,3,0,\dots)$ and $\bb'=(3,1,0,\dots)$.  Then, with the notation of the previous paragraph, $F=E^1\oplus E(1)^3$ and $F'=E(-1)^3\oplus E^1$.  A homogeneous map $F\to F'$ is thus determined by a block $4\times 4$ matrix which consists of $6$ entries of degree $-1$ (corresponding to submatrices $E^1\to E(-1)^3$ and $E(1)^3\to E^1$) and $9$ entries of degree $-2$ (corresponding to $E(1)^3\to E(-1)^3$).  So we set $\bd=(6,9,0,0,\dots)$.  Based only on degree considerations, this $4\times 4$ matrix also has a degree zero entry.  However, the matrices that arise in Tate resolutions never have units, and since we are parametrizing such matrices, we set all such entries to be zero.  A matrix of type $(\bb,\bb')$ thus corresponds to a $4\times 4$ matrix of the form:
\[
\begin{pmatrix}
a_1&b_{1,1}&b_{1,2}&b_{1,3}\\
a_2&b_{2,1}&b_{2,2}&b_{2,3}\\
a_3&b_{3,1}&b_{3,2}&b_{3,3}\\
0&a_{4}&a_{5}&a_6\\
\end{pmatrix},
\]
where the $a_i$ are elements in $E$ of degree $-1$ and the $b_{j,k}$ are elements in $E$ of degree $-2$.
\end{example}

Now we take a limit as $n\to \infty$. Let $W_{\infty}$ be the direct limit of the spaces $W_n$, under the standard inclusions, and let $E_{\infty}=\bigwedge{W_{\infty}}$. The inclusions $W_n \to W_{n+1}$ also induce inclusions $X_{\bd,n}\to X_{\bd,n+1}$, and we let $X_{\bd}$ be the direct limit, equipped with the direct limit topology.  Recall that a topological space $Z$ with the action of a group $G$ is said to be {\bf $G$-noetherian} if every descending chain of $G$-invariant closed subspaces of $Z$ stabilizes.  Since each space $X_{\bd,n}$ has a natural action of $\GL_n$ on it, which is induced by the action of $\GL_n$ on $W_n$, we obtain an induced action of $\GL:=\cup_n \GL_n$ on $X_{\bd}$.  We have that $X_{\bd}$ is $\GL$-noetherian by~\cite{draisma} and~\cite[Corollary~2.8]{genstillman}. As above, we identify points $\phi \in X_{\bd}$ with matrices, in this case with entries in $E_{\infty}$.  We can thus view the space $X_{\bd}$ as parametrizing matrices of type $(\bb,\bb')$.

This is not yet the parameter space that we want to work with.  As noted in Lemma~\ref{lem:reg bounds}, the matrices that appear in a Tate resolution with specified columns will have built-in regularity bounds.  We thus want to pass to the sublocus of $X_{\bd}$ of matrices satisfying that same regularity bound, as this will allow us to parametrize coherent sheaves with specified columns in its cohomology table.  

We will say that a coherent sheaf $\cE$ on $\bP^n$ has {\bf type $(\bb,\bb')$} if $\gamma^0(\cE)=\bb$ and $\gamma^1(\cE)=\bb'$.  If some $\phi\in X_{\bd}$ comes from a Tate resolution of a type $(\bb,\bb')$ sheaf $\cE$, then by Lemma~\ref{lem:reg bounds}, it must be the case that the regularity of $\coker \phi^\vee$ is zero.  So we let
\[
  X_{\bd}^0=\{\phi \in X_\bd \mid \reg \coker \phi^\vee =0\}
\]
be this subset.  We endow $X_{\bd}^0$ with the subspace topology, and thus $X_{\bd}^0$ is also $\GL$-noetherian.  A Tate resolution for a coherent sheaf $\cE$ of type $(\bb,\bb')$ thus induces an element $\phi\in X_{\bd}^0$.  

This construction is reversible as well.  Assume that we are given some $x\in X_{\bd}^0$ and choose $n$ minimally so that $x \in X_{\bd,n}$.  Let $\phi$ be the associated matrix of type $(\bb,\bb')$. Then, by Lemma~\ref{lem:reg bounds}, if $\bF$ is the minimal free resolution of $\coker \phi^\vee$, then $\bF_j$ is generated in degree $-j$ for $j\geq k$ for some $k$.  Writing $\bF^\vee_{\geq k}$ for the dual complex, we let $M=\bL(\bF^\vee_{\geq k})$ be the corresponding $\Sym(V_n)$-module, noting that, since $\bF^\vee_{\geq k}$ is a linear complex, applying $\bL$ yields a $\Sym(V_n)$-module, thought of as a  complex of modules concentrated in a single homological degree~\cite[Introduction]{efs}.  Let $\cE$ be the coherent sheaf on $\bP^n$ corresponding to $M$.  By construction, the truncated Tate resolution $\bT(\cE)^{\geq k}$ is isomorphic to $\bF^\vee_{\geq k}$ (see \S\ref{subsec:tate}) and thus
$\cE$ has type $(\bb,\bb')$.  In particular, by uniqueness of minimal free resolutions, there is a Tate resolution for $\cE$ where $\phi$ is the $k$th differential.  Since $\cE$ is determined by the point $x\in X_{\bd,n}$, we write $\cE_{x}$ for the sheaf on $\bP^n$ that results from this construction.  We will see in Lemma~\ref{lem:pushforward} that we would obtain an isomorphic sheaf for any $m\geq n$, and thus any sheaf cohomology groups of the resulting sheaf $\cE_x$ are cone-stable (see \S\ref{sec:background} for the definition of cone-stability and Remark~\ref{rmk:pushforward} for more).

\begin{remark}\label{rmk:dual matrix}
One potentially confusing point in this construction is that when associating a matrix $\phi$ to a point $x$, the matrix $\phi$ represents the differential $\partial^0\colon \bT^0(\cE_x)\to \bT^1(\cE_x)$, but the regularity bound from Lemma~\ref{lem:reg bounds} applies not to $\phi$ itself but $\phi^\vee$.  
In fact, there is no a priori bound on the regularity of $\coker \phi$.  However, Theorem~\ref{mainthm} implies the existence of such a bound.
\end{remark}

\subsection{Universal modules}\label{subsec:universal module}

Let $\cF=\bigoplus_{i \ge 0} E_{\infty}(i)^{\bb_i}$ and $\cF'= \bigoplus_{i \ge 0} E_{\infty}(i-1)^{\bb'_i}$ be the free $E_{\infty}$-modules corresponding to $\bb$ and $\bb'$.  There is a map $\Phi$ of $E_{\infty}$-modules $\Phi \colon \cF\to \cF'$ which is a universal matrix of type $(\bb,\bb')$ in the sense that by specializing the coefficients of $\Phi$, we can obtain any matrix $\phi \in X_{\bd}$ of type $(\bb,\bb')$. Set $\cM = \coker(\Phi^\vee)$ to be the universal module. In particular, if $x$ is a point of $X_{\bd}$ which corresponds to the matrix $\phi\colon F\to F'$, then the specialization $\Phi|_{x}$ is precisely $\phi$. Thus $\cM_x \cong \coker(\phi^\vee)$. These constructions also make sense at each finite level $n$, but when the context is clear, we will avoid subscripts to keep the notation manageable.

\begin{lemma}\label{lem:open}
  Let $U_{\bd}\subseteq X_{\bd}$ be the locus of points $x$ where the matrix $\phi^\vee$ corresponding to $x$ is a minimal presentation of $\cM_x$.
  \begin{enumerate}[\rm \indent (a)]
  \item  $U_{\bd}$ is Zariski open in $X_{\bd}$.  
  \item  For any point $x\notin U_{\bd}$, there exists a matrix $\psi$ of type $(\bb,\bb'')$ with $\bb''\leq \bb'$ in the termwise partial order, such that $\coker \psi^\vee \cong \cM_x$. 
  \end{enumerate}
\end{lemma}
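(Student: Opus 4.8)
The plan is to prove part (a) first via a rank/minimality criterion, then deduce part (b) by an elementary matrix-reduction argument.

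For part (a), I would argue as follows. A point $x \in X_{\bd}$ corresponds to a matrix $\phi \colon F \to F'$, and $\phi^\vee \colon (F')^* \to F^*$ presents $\cM_x = \coker(\phi^\vee)$. Since $F'$ (hence $(F')^*$) is a \emph{minimal} set of generators for the source only in the sense of the chosen degrees, the presentation $\phi^\vee$ fails to be minimal exactly when some entry of $\phi^\vee$ is a unit, i.e. a nonzero scalar (degree $0$ in $E_\infty$). By construction of $X_{\bd}$ we have already set the degree-$0$ entries to zero, so that cannot happen directly; rather, non-minimality occurs when, after a change of basis of $(F')^*$ within a fixed graded piece, one can produce a unit entry. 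Equivalently, $\phi^\vee$ is non-minimal iff the composite $(F')^* \otimes \bk \to F^* \otimes \bk$ (reduction modulo the maximal ideal $\fm_E$) is nonzero. Now $\phi^\vee \bmod \fm_E$ is a matrix whose entries are $\bk$-linear in the coordinates of $X_{\bd}$: in a fixed degree, an entry of $\phi$ that is a scalar multiple of a wedge monomial of the ``wrong'' (too-low) degree would survive, but in our setup the only entries contributing are those of the minimal possible degree for that block, and the reduction mod $\fm_E$ picks out the coefficient of $1 \in E$, which is zero for all entries of negative degree. Hence I need to be more careful: the correct statement is that $\phi^\vee$ is a minimal presentation iff $\phi^\vee$ has no unit entries \emph{after no change of basis is needed}, because the source is free on generators of prescribed degrees and a degree-$0$ map between $E(a)$ and $E(b)$ is forced to be scalar, hence zero by our normalization, so $U_{\bd}$ should be \emph{all} of $X_{\bd}$ — which is false. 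So the genuine issue is: the free modules $\cF, \cF'$ may have repeated degrees, and within a graded piece of $\cF'$ a basis change can move a nonzero scalar into a ``new'' block. Concretely, $\phi^\vee$ fails to be minimal precisely when $\coker(\phi^\vee)$ needs fewer than $\mathbf b'_0$ generators in degree $0$ (or fewer in some other degree), which happens iff the submatrix of $\phi^\vee \bmod \fm_E$ — built from the degree-$0$-to-degree-$0$ scalar blocks — has positive rank. That submatrix has entries that are \emph{linear functions of the coordinates} on $X_{\bd}$ (the relevant coordinates being exactly the $\bd_1$-many degree-$1$ entries paired appropriately, reduced mod $\fm$). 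The locus where this matrix has rank $0$ is the vanishing of all those linear coordinates, hence Zariski closed, so its complement $U_{\bd}$ is Zariski open. I would phrase this cleanly: let $\bar\phi^\vee$ denote the image of $\phi^\vee$ in $\Hom_\bk(\cF'\otimes\bk, \cF\otimes\bk)$ (reduction mod $\fm_E$); then $x \in U_{\bd}$ iff $\bar\phi^\vee = 0$ fails to... no: $x \in U_{\bd}$ iff $\bar\phi^\vee$ has rank equal to... Actually the cleanest criterion is: $\phi^\vee$ is minimal iff $\bar\phi^\vee$, viewed as a graded map, is zero, i.e. no unit entries appear, and since the entries of $\bar\phi^\vee$ are coordinates on $X_{\bd}$ (linear), the non-minimal locus $\{\bar\phi^\vee \neq 0\}$ is the complement of a linear subspace, hence closed, so $U_{\bd}$ is open. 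This is the step I expect to require the most care — pinning down exactly which coordinate functions of $X_{\bd}$ realize the reduction $\bar\phi^\vee$, and checking that a minimal presentation is detected by $\bar\phi^\vee = 0$ (standard: over a local or graded-local ring, a presentation is minimal iff the differential has entries in the maximal ideal).

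For part (b), suppose $x \notin U_{\bd}$. Then $\bar\phi^\vee \neq 0$, so after a $\bk$-linear change of basis on the free module $(F')^*$ (respecting the grading) I can assume one of the generators maps to a unit times another generator of $F^*$; splitting off this trivial summand from the complex $(F')^* \xrightarrow{\phi^\vee} F^*$ via the standard ``pruning'' of a free presentation, I obtain a new presentation $\psi^\vee \colon (F''')^* \to (F^{(4)})^*$ with strictly fewer generators in some degree and $\coker(\psi^\vee) \cong \coker(\phi^\vee) = \cM_x$. Dualizing back, $\psi \colon F^{(4)} \to F'''$; the source has the same invariant $\bb$ (we only pruned from $F'$, i.e. from the target of $\phi^\vee$, which is the source $F$ of $\phi$) — wait, I must track which side gets pruned. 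In $\phi^\vee \colon (F')^* \to F^*$, pruning removes a generator from $(F')^*$ \emph{and} a generator from $F^*$; but the generator removed from $F^* $ came from $F$, whose invariant is $\bb$. To keep $\bb$ fixed, I should instead note that the statement allows $\bb''\le\bb'$ and keeps $\bb$; this means the pruning should only strip summands corresponding to $\bb'$. Concretely, a unit entry of $\phi^\vee$ in the block $E(a)^* \to E(b)^*$ with $a$ coming from $\bb'$ and $b$ from $\bb$: pruning removes one copy of each. To preserve the $\bb$-side, I instead observe that non-minimality of $\phi^\vee$ as a presentation of $\cM_x$ is equivalent to $\cF$ (the target of $\phi^\vee$, carrying $\bb$) having a redundant generator \emph{in the image}, i.e. $\mathcal M_x$ has a free summand or the cokernel presentation simply has $\cF$ too large; since $\cF$ is the \emph{first} syzygy module's ambient free module, shrinking it shrinks $\bb$ — contradiction with keeping $\bb$. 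The resolution is that the redundancy we detect lies in $\cF'$: a column of $\phi^\vee$ (an element of $\cF$) that is, mod $\fm$, a combination of others means a \emph{relation} among the generators of $\cM_x$ is redundant, so we can drop a summand of $\cF' $ (the source), lowering $\bb'$ to some $\bb'' \le \bb'$, while $\cF$ and hence $\bb$ is unchanged. The resulting $\psi^\vee \colon \cF''^* \to \cF^*$ with $\cF'' = \bigoplus E(i-1)^{\bb''_i}$ still has $\coker \psi^\vee \cong \cM_x$, and $\psi$ is a matrix of type $(\bb, \bb'')$. I'd also remark that one may iterate until reaching $U_{\bd}$-type minimality, but a single reduction suffices for the statement. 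The main obstacle overall is bookkeeping: correctly identifying that the non-minimality is ``absorbed'' on the $\bb'$-side and never forces a change in $\bb$, which is exactly why the statement asserts $\bb'' \le \bb'$ with $\bb$ untouched.
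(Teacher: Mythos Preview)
Your argument for part~(a) has a genuine gap. The criterion you settle on---that $\phi^\vee$ is a minimal presentation iff its reduction $\bar\phi^\vee$ modulo the maximal ideal $\fm_E$ vanishes---only detects whether the \emph{generators} of $\cM_x$ (i.e., the target $F^*$) are redundant. But by the very construction of $X_{\bd}$ all degree-$0$ entries of $\phi$ have been set to zero, so every entry of $\phi^\vee$ already lies in $\fm_E$ and $\bar\phi^\vee=0$ for \emph{every} $x\in X_{\bd}$. You noticed this yourself (``$U_{\bd}$ should be all of $X_{\bd}$ --- which is false''), but your subsequent attempts to repair it via change of basis and ``rank of the scalar block'' circle back to the same criterion: a graded change of basis cannot produce a unit entry where none exists, and the degree-$1$ entries you single out still reduce to zero mod $\fm_E$. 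The failure of minimality here is on the \emph{relations} side: $\phi^\vee$ is non-minimal exactly when some column of $\phi^\vee$ lies in the $E$-linear span of the remaining columns. This is not a ``reduction mod $\fm$'' condition and cannot be detected by linear equations in the coordinates of $X_{\bd}$.

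The paper handles this by observing that ``column $c$ is an $E$-combination of the others'' means $\phi$ lies in the image of a polynomial map $X_{\bd'}\times X_{\bd''}\to X_{\bd}$ (where $\bd'$ records the submatrix with column $c$ deleted and $\bd''$ records the coefficients $f_i$). Closedness of this image is then obtained at each finite level $n$ by noting the map is scale-invariant, projectivizing source and target, using properness of the resulting map of projective varieties, and passing back to the affine cone. The complement of $U_{\bd}$ is a finite union of such images. This properness step is the substantive content of (a), and nothing in your proposal addresses it. For part~(b) you do eventually arrive at the correct idea---drop a redundant column of $\phi^\vee$, which lowers the $\bb'$-side while leaving $\bb$ fixed---and this matches the paper; but the long detour through pruning unit entries is unnecessary once one has the correct column-redundancy criterion.
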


\begin{proof}
Let $\phi\in X_{\bd}$ be a matrix such that $\phi^\vee$ is not a minimal presentation of its cokernel.  Since we have built into our construction that any such matrix has no entries that are units, this happens if and only if one of the columns of $\phi^\vee$ is in the image of the other columns.  Let $c$ be the number of columns of $\phi^\vee$; let $\phi^\vee_1,\dots,\phi^\vee_c$ be these columns and suppose that we can write $\phi^\vee_c = \sum_{i=1}^{c-1} f_i\phi^\vee_i$.  Let $\bd'$ be the degree sequence corresponding  to the entries of the matrix $(\phi^\vee_1 \phi^\vee_2\cdots \phi^\vee_{c-1})$ and let $\bd''$ be the degree sequence corresponding to the exterior polynomials $f_i$.  Then $\phi$ lies in the image of $X_{\bd'}\times X_{\bd''}\to X_{\bd}$.  In fact, this is equivalent to saying that $\phi^\vee_c$ is a polynomial combination of the other columns.

It follows that the complement of $U_\bd$ is a finite union of such images.  Part (b) is immediate from the construction, and $\cM_x$ is just the cokernel of $(\phi^\vee_1 \phi^\vee_2 \cdots \phi^\vee_{c-1})$. By \cite[Proposition 2.8]{genstillman}, to show that $U_\bd$ is open, it suffices to show that $U_{\bd,n}$ is open for all $n$. The map $X_{\bd',n} \times X_{\bd'',n} \to X_{\bd,n}$ is invariant under scaling, so we may projectivize both the source and target. In that case, the map becomes proper, so the image is closed. This means the image of the original map is an affine cone over a closed subset and hence is also closed. In particular, the complement of $U_{\bd,n}$ is a finite union of closed subsets, and hence $U_{\bd,n}$ is open.
\end{proof}

\subsection{Semicontinuity of Betti numbers}
Using the universal module $\cM$ we can obtain Zariski closed subloci of $X_{\bd,n}$ via any semi-continuous module invariant.  For instance:

\begin{lemma}\label{lem:hilbert values}
For any $n,r\in \bN$ and $e\in \bZ$, the locus of $x\in X_{\bd,n}$ such that $\dim (\cM_x)_e\geq r$ is Zariski closed.
\end{lemma}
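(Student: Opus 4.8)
The statement to prove is Lemma~\ref{lem:hilbert values}: for fixed $n$, $r$, $e$, the locus $\{x \in X_{\bd,n} : \dim_\bk (\cM_x)_e \ge r\}$ is Zariski closed. The plan is to exhibit this locus as the degeneracy locus of a morphism of vector bundles over the (finite-dimensional) affine space $X_{\bd,n}$, which is a standard way to produce a closed set. First I would observe that since $E = \bigwedge W_n$ is finite-dimensional over $\bk$, each graded piece $\cF_d$ and $\cF'_d$ (in the notation of \S\ref{subsec:universal module}, taken at level $n$) is a finite-dimensional $\bk$-vector space, and the universal matrix $\Phi^\vee \colon \cF'^{\,*} \to \cF^*$ restricts in each internal degree $d$ to a map $\Phi^\vee_d \colon (\cF'^{\,*})_d \to (\cF^*)_d$ whose matrix entries are \emph{linear} in the coordinates on $X_{\bd,n}$ (the entries of the exterior-algebra matrix $\phi$ are the coordinates, and multiplication in $E$ is bilinear). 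Thus $\Phi^\vee_d$ is a morphism of trivial vector bundles over $X_{\bd,n}$, i.e.\ a matrix of linear forms.

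Next I would identify $(\cM_x)_e = \coker(\phi^\vee)_e$ with the cokernel of $\Phi^\vee_e$ specialized at $x$. Here one must be slightly careful: $\cM_x = \coker(\phi^\vee)$ as a graded module, and taking the degree-$e$ part is right-exact, so $(\cM_x)_e = \coker\bigl((\Phi^\vee_e)|_x\bigr)$ where $\Phi^\vee_e \colon (\cF'^{\,*})_e \to (\cF^*)_e$ is one of the finitely many graded components of $\Phi^\vee$ described above. Then $\dim_\bk(\cM_x)_e = \dim_\bk (\cF^*)_e - \operatorname{rank}\bigl((\Phi^\vee_e)|_x\bigr)$, so the condition $\dim_\bk(\cM_x)_e \ge r$ is equivalent to $\operatorname{rank}\bigl((\Phi^\vee_e)|_x\bigr) \le \dim_\bk(\cF^*)_e - r$, which is a closed condition cut out by the vanishing of the appropriate minors of the matrix of linear forms representing $\Phi^\vee_e$. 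This exhibits the locus as a Zariski closed subset of $X_{\bd,n}$, completing the proof.

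The only real subtlety — and the step I would take most care with — is making sure that $(\cM_x)_e$ genuinely commutes with specialization, i.e.\ that forming the cokernel of $\Phi^\vee$ and then restricting to $x$ and degree $e$ gives the same answer as restricting $\Phi^\vee$ to degree $e$ and $x$ first and then taking the cokernel. This holds because cokernel is right-exact and hence commutes with the base change $- \otimes_{\cO_{X_{\bd,n}}} \bk(x)$, and because taking a graded piece is exact on graded modules; concretely, the degree-$e$ piece of $\coker(\phi^\vee)$ only involves the finitely many generators of $\cF$ in degrees $\le e$ and the finitely many relations, all of whose coefficients are polynomial (in fact linear) in $x$. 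Everything else — finite-dimensionality of each $E(i)_d$, bilinearity of exterior multiplication, closedness of rank-$\le k$ loci — is routine linear algebra over the polynomial ring $\cO(X_{\bd,n})$, so this lemma is genuinely a warm-up for the more delicate semicontinuity results (such as the closedness of $Z_i$) that follow.
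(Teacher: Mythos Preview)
Your proof is correct and follows essentially the same approach as the paper: both identify $(\cM_x)_e$ with the fiber at $x$ of the degree-$e$ piece $\cM_e$, viewed as a coherent sheaf on $X_{\bd,n}$, and conclude by upper semicontinuity of fiber dimension. The only difference is that the paper cites the standard result from Hartshorne (Example~III.12.7.2), whereas you unpack that citation into an explicit degeneracy-locus/minors argument; your extra care with the commutation of cokernel and specialization is exactly the content of the paper's identification $(\cM_x)_e \cong (\cM_e)_x$.
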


\begin{proof}
As an $\cO_{X_{\bd,n}}$-module, $\cM$ splits as $\cM=\bigoplus_{d\in \bZ} \cM_d$.  We thus have $(\cM_x)_e \cong (\cM_e)_x$.  The statement now follows from standard semi-continuity results for coherent sheaves \cite[Example III.12.7.2]{hartshorne}.
\end{proof}

Since Hilbert function values such as $\dim(\cM_x)_e$ are not cone-stable, we will need to do a bit of extra work to obtain the required semi-continuity results on $X_{\bd}$.  For any finitely generated, graded $\bigwedge W_n$-module $N$, we define the invariant
\[
  \alpha_k(N) = \sum_{i=0}^\infty (-1)^i\beta_{i,k}(N).
\]
Note that for fixed $k$, $\beta_{i,k}(N)$ is nonzero for only finitely many $i$, so this is really a finite sum. Since Betti numbers are cone-stable, so are the invariants $\alpha_k$.  Since the notation in this subsection gets a bit heavy, we provide the following remark as an overview of the results.

\begin{remark}\label{rmk:summary}
We summarize the notation and results from this section with the following diagram. For any $x\in U_{\bd}$ (see Lemma~\ref{lem:open}) the Betti table of $\cM_x$ will look like (here $s$ is the maximal integer such that either $\bb_s$ or $\bb'_s$ is nonzero):
\addtocounter{equation}{-1}
\begin{subequations}
  \begin{equation}
    \label{eqn:beta mx}
\beta(\cM_x) = 
\begin{array}{c|c c c c c c c}&0&1&2 &\cdots&i&\cdots & s+2 \cr \hline
              s &\bb_s&\bb'_s&\beta_{2,s-2}&\cdots&\beta_{i,s-i} & \cdots & \beta_{s+2,-2}  \cr
              \vdots &\vdots&\vdots&\vdots&\cdots&\cdots&\cr
              1&\bb_1&\bb'_1&\beta_{2,-1}&\cdots&\beta_{i,1-i}& \cdots & \beta_{s+2,-s-1}  \cr
              0&\bb_0&\bb'_0&\beta_{2,-2}&\cdots&\beta_{i,-i}& \cdots & \beta_{s+2,-s-2} \cr
-1&-&-&\vdots&\ddots&&\ddots
       \end{array}. 
     \end{equation}
   \end{subequations}
If $x\notin U_{\bd}$, then at least one of the $\bb'_i$ can be reduced by one.  

There are no nonzero entries in row $s+1$ since this is the Betti table of a minimal free resolution. The invariants $\alpha_k(\cM_x)$ are obtained by taking the alternating sum of the entries lying on slope $1$ diagonals, starting with $\bb_k$.  The range $-2\leq k \leq s$ that appears in Lemma~\ref{lem:finite betti} and the related results ensure that we contain every diagonal up to and including the one passing through $\beta_{2,-2}$.  This plays a key role in the later application to sheaf cohomology tables, which have a built-in floor in the zeroth row (see Lemma~\ref{lem:reg bounds}), and the indexing is chosen to line up with that case.

Corollary~\ref{cor:2nd column} shows that for each $-2\leq j \leq s$ there are finitely many possibilities for the Betti numbers $\beta_{2,j}(\cM_x)$ as $x$ varies in $X_{\bd}$.
\end{remark}

\begin{lemma}\label{lem:hilb and alpha}
Let $N$ be a finitely generated, graded $\bigwedge W_n$-module.  
The Hilbert function of $N$ in degree $e$ is determined by $\{\alpha_k(N)\}_{k\geq e}$  via the formula
\addtocounter{equation}{-1}
\begin{subequations}
\begin{equation}\label{eqn:alpha and hilb}
\dim N_e = \sum_{j=e}^\infty \alpha_j(N) \binom{n+1}{j-e}.
\end{equation}
\end{subequations}
\end{lemma}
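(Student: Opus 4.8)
The plan is to read off $\dim N_e$ from a minimal graded free resolution of $N$ by an Euler-characteristic argument, and then to reorganize the resulting sum along the diagonals indexed by $k$ so that the alternating sums $\sum_i(-1)^i\beta_{i,k}(N)=\alpha_k(N)$ appear. So I would begin by fixing a minimal graded free resolution
\[
\cdots \longrightarrow F_2 \longrightarrow F_1 \longrightarrow F_0 \longrightarrow N \longrightarrow 0, \qquad F_i=\bigoplus_{k\in\bZ} E(-k)^{\beta_{i,k}(N)},
\]
of $N$ over $E=\bigwedge W_n$, each $F_i$ of finite rank since $N$ is finitely generated.

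The key point, and the only genuine subtlety (over the exterior algebra this resolution is typically infinite), is that for each fixed degree $e$ only finitely many of the $F_i$ contribute in that degree. Indeed, by minimality the differentials have entries in the graded maximal ideal, which forces the generating degrees to strictly decrease from step to step: writing $a$ for the top generating degree of $N$ (i.e.\ the largest $k$ with $\beta_{0,k}(N)\ne 0$), an easy induction on $i$ gives $\beta_{i,k}(N)\ne 0 \Rightarrow k\le a-i$. On the other hand $E(-k)$ has a nonzero piece in degree $e$ only when $0\le k-e\le n+1$, in which case $\dim (E(-k))_e=\dim\bigwedge^{k-e}W_n=\binom{n+1}{k-e}$. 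Combining these, $(F_i)_e\ne 0$ forces $i\le a-e$. Restricting the resolution to degree $e$ therefore yields a \emph{finite} exact sequence of finite-dimensional $\bk$-vector spaces $\cdots\to (F_1)_e\to (F_0)_e\to N_e\to 0$, whence $\dim N_e=\sum_{i\ge 0}(-1)^i\dim (F_i)_e$, a finite sum.

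It then remains to substitute and collect terms. Expanding $\dim (F_i)_e=\sum_k \beta_{i,k}(N)\binom{n+1}{k-e}$ and interchanging the two (now finite) sums gives
\[
\dim N_e=\sum_{k\in\bZ}\binom{n+1}{k-e}\sum_{i\ge 0}(-1)^i\beta_{i,k}(N)=\sum_{k\in\bZ}\binom{n+1}{k-e}\,\alpha_k(N).
\]
Since $\binom{n+1}{k-e}=0$ for $k<e$, the sum runs over $k\ge e$, which is exactly \eqref{eqn:alpha and hilb} (and it is in fact finite, as $\binom{n+1}{k-e}$ also vanishes for $k>e+n+1$). The one place that needs care is the finiteness argument of the second paragraph; once that is in hand the rest is pure bookkeeping. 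One could alternatively run the whole computation at the level of Hilbert series, but the per-degree argument above sidesteps having to make sense of the relevant formal Laurent series.
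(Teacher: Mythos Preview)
Your proof is correct and follows essentially the same approach as the paper: take a minimal free resolution, compute $\dim N_e$ as an alternating sum of $\dim(F_i)_e$, expand each term via $\dim E(-j)_e=\binom{n+1}{j-e}$, and regroup to produce the $\alpha_j$'s. The paper is simply terser, asserting ``the sum is finite because $\bF$ is minimal'' where you spell out the degree-drop argument in your second paragraph.
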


\begin{proof}
Let $\bF$ be a minimal free resolution of $N$.  Write $E=\bigwedge W_n$.  We have
\begin{align*}
\dim N_e &= \sum_{i=0}^\infty (-1)^i\dim (\bF_i)_e\\
\intertext{ where the sum is finite because $\bF$ is minimal.  We can rewrite this as}
\dim N_e&=\sum_{i=0}^\infty \sum_{j\in \bZ} (-1)^i\beta_{i,j}(N) \dim E(-j)_e\\
&=\sum_{j\in \bZ}\alpha_j(N) \dim E(-j)_e.\\
\intertext{However $E(-j)_e=0$ if $e>j$.  If $j\geq e$ then $\dim E(-j)_e = \binom{n+1}{j-e}$.  This yields:}
\dim N_e &=\sum_{j=e}^\infty \alpha_j(N) \binom{n+1}{j-e}. \qedhere
\end{align*}
\end{proof}

It is well-known that Betti numbers are weakly semi-continuous, i.e., they are semi-continuous in families with a fixed Hilbert function. In fact, an elementary argument shows that if one restricts to a family which has fixed Hilbert function values in degrees $>k$, then each Betti number $\beta_{i,j}$ with $j>k$ is semi-continuous.  The following lemma shows that one can do even a bit better than this if one only cares about Betti numbers $\beta_{i,j}$ with $i\geq 2$.  Note that, by definition of $\bb$, each Betti number $\beta_{i,j}(\cM_x)$ will be zero for $j>s$ by Remark~\ref{rmk:summary}. In particular, $\alpha_j(\cM_x)=0$ for $j>s$, so there is no need to fix these Hilbert function values.  

\begin{lemma}\label{lem:finite betti}
For some $-2\leq k \leq s$ fix a vector $\nu=(\nu_s,\nu_{s-1},\dots,\nu_{k+1})$.  Let
\[
  X^{\nu}_{\bd}=\{x \in X_{\bd} \mid \alpha_j(\cM_x) = \nu_j \text{ for all } j=k+1,\dots,s\}.
\]  
For any $i\geq 2$ and any $k\leq j\leq s$, the function $X^{\nu}_{\bd} \to \bN$ given by $x\mapsto \beta_{i,j}(\cM_x)$ is upper semi-continuous.
\end{lemma}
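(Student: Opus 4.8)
The plan is to reduce to each finite level $n$, compute $\beta_{i,j}(\cM_x)$ by means of one fixed linear free resolution of the residue field over the exterior algebra, and then exploit the fact that passing to $X^\nu_{\bd}$ turns the relevant graded pieces of the universal module $\cM$ into locally free sheaves. As in the proof of Lemma~\ref{lem:open}, by \cite[Proposition~2.8]{genstillman} it suffices to show that $x\mapsto\beta_{i,j}(\cM_x)$ is upper semi-continuous on $X^\nu_{\bd,n}:=X^\nu_{\bd}\cap X_{\bd,n}$ for every $n$; so fix $n$ and write $E=\bigwedge W_n$. Since $E$ is a Koszul algebra, the residue field $\bk$ has a linear minimal free resolution $\bG_\bullet\to\bk$ with $\bG_i=E(i)^{c_i}$, where $c_i=\binom{n+i}{i}$ and all entries of the differentials of $\bG_\bullet$ lie in $E_{-1}=W_n$. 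For any finitely generated graded $E$-module $N$ one has $\beta_{i,j}(N)=\dim_\bk\Tor^E_i(N,\bk)_j=\dim_\bk H_i(N\otimes_E\bG_\bullet)_j$, and the degree-$j$ strand of $N\otimes_E\bG_\bullet$ is the finite complex of $\bk$-vector spaces whose term in homological degree $i$ is $N_{i+j}^{c_i}$, with differentials $\delta_i\colon N_{i+j}^{c_i}\to N_{i+j-1}^{c_{i-1}}$ determined by the $E$-module structure of $N$. Computing the dimension of the homology in homological degree $i$ gives
\[
\beta_{i,j}(N)=c_i\dim_\bk N_{i+j}-\operatorname{rank}(\delta_i)-\operatorname{rank}(\delta_{i+1}).
\]

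I would then apply this with $N=\cM_x$ for $x$ ranging over $X^\nu_{\bd,n}$. Since $i\ge 2$ and $j\ge k$, all three degrees $i+j-1$, $i+j$, $i+j+1$ appearing in the formula are $\ge k+1$. By Lemma~\ref{lem:hilb and alpha} we have $\dim_\bk(\cM_x)_e=\sum_{j'\ge e}\alpha_{j'}(\cM_x)\binom{n+1}{j'-e}$, and for $e\ge k+1$ the right-hand side involves only the quantities $\alpha_{j'}(\cM_x)$ with $j'\ge k+1$ (those with $j'>s$ vanish, by the choice of $s$); hence $\dim_\bk(\cM_x)_e$ is constant on $X^\nu_{\bd,n}$ for every $e\ge k+1$. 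In particular $c_i\dim_\bk(\cM_x)_{i+j}$ is constant on $X^\nu_{\bd,n}$. Moreover $X^\nu_{\bd,n}$ is a locally closed subset of the affine space $X_{\bd,n}$—it is cut out by the finitely many conditions that $\dim_\bk(\cM_x)_e$ take its prescribed constant value for $e=k+1,\dots,s$, each such condition being locally closed—and, endowing it with its reduced structure, each $\cM_e$ with $e\ge k+1$ has constant fibre dimension there and hence restricts to a locally free sheaf on $X^\nu_{\bd,n}$. Consequently, after restriction to $X^\nu_{\bd,n}$, the maps $\delta_i$ and $\delta_{i+1}$ are morphisms between locally free sheaves, given by matrices whose entries are regular functions (polynomial in the coordinates of $X_{\bd,n}$, coming from specializing $\Phi^\vee$ and the fixed linear differentials of $\bG_\bullet$). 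The rank of a morphism of locally free sheaves is lower semi-continuous, so $-\operatorname{rank}(\delta_i)$ and $-\operatorname{rank}(\delta_{i+1})$ are upper semi-continuous on $X^\nu_{\bd,n}$; adding the constant $c_i\dim_\bk(\cM_x)_{i+j}$ shows that $x\mapsto\beta_{i,j}(\cM_x)$ is upper semi-continuous, completing the argument.

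The step I expect to require the most care is the passage to locally free sheaves. Over $X_{\bd,n}$ itself the graded pieces of $\cM$ are genuinely not locally free, so the ranks of $\delta_i,\delta_{i+1}$ need not vary semi-continuously, and the whole argument hinges on restricting to $X^\nu_{\bd}$, where the Hilbert function of $\cM_x$ is pinned down in all degrees $\ge k+1$. The hypotheses $i\ge 2$ and $j\ge k$ are exactly what guarantees that every degree appearing in the formula for $\beta_{i,j}$ falls in this stabilized range; for $\beta_{1,k}$ the degree $k$ also enters, the sheaf $\cM_k$ is not locally free on $X^\nu_{\bd,n}$, and this approach would break down there.
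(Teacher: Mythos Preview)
Your proof is correct and follows essentially the same line as the paper. Both arguments reduce to a fixed $n$, use Lemma~\ref{lem:hilb and alpha} to see that the Hilbert function of $\cM_x$ is constant in degrees $\ge k+1$ on $X^\nu_{\bd,n}$ (so the relevant $\cM_e$ are locally free there), and then compute $\beta_{i,j}(\cM_x)$ as the homology in the appropriate spot of the Cartan complex $S_\bullet W_n\otimes(\cM_x)_{j+\bullet}$; the paper cites \cite[Theorem~7.8]{geom-syzygies} for this, whereas you phrase it as tensoring with the linear free resolution of $\bk$ over $E$, which is the same complex. The only cosmetic differences are that the paper invokes cone-stability of Betti numbers to pass to finite $n$ while you cite \cite[Proposition~2.8]{genstillman}, and the paper appeals directly to upper semi-continuity of fiberwise homology of a complex of vector bundles while you unwind this via the rank formula $\beta_{i,j}=c_i\dim(\cM_x)_{i+j}-\operatorname{rank}\delta_i-\operatorname{rank}\delta_{i+1}$.
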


\begin{proof}
Since Betti numbers are cone-stable, it suffices to prove that the function is upper semi-continuous on $X^{\nu}_{\bd,n}$.   
By our assumption on $\nu$ and Lemma~\ref{lem:hilb and alpha}, for $x\in X^{\nu}_{\bd,n}$, the module $\cM_x$ has constant Hilbert function in degrees $k+1,\dots,s$.  It follows that $\cM_d$ is a finite rank vector bundle for $d=k+1,\dots,s$.  

By \cite[Theorem 7.8]{geom-syzygies}, $\beta_{i,j}(\cM_x)$ is the dimension of the homology of the complex:
\[
S_{i+1}W_n\otimes_{\bk} (\cM_x)_{j+i+1} \to S_iW_n\otimes_{\bk} (\cM_x)_{j+i}\to S_{i-1}W_n\otimes_{\bk} (\cM_x)_{j+i-1}.
\]
Note that, for $k\leq j \leq s$ and $i\geq 2$, all degrees in the above complex are in the range where $\cM$ is a vector bundle.  We also have $(\cM_{x})_d \cong (\cM_d)_x$ for all degrees $d$.  Thus, the locus of points $x$ where $\beta_{i,j}(\cM_x)\geq r$ is the locus where, after specializing to $x$, the homology of the following complex of finite rank vector bundles has dimension $\geq r$:
\[
S_{i+1}W_n\otimes_{\bk} \cM_{j+i+1} \to S_iW_n\otimes_{\bk} \cM_{j+i}\to S_{i-1}W_n\otimes_{\bk} \cM_{j+i-1}.
\]
Hence our desired result follows from standard upper-semicontinuity properties of coherent sheaves \cite[Example III.12.7.2]{hartshorne}.
\end{proof}

\begin{corollary}\label{cor:alpha}
For any $-2\leq k \leq s$, there are only finitely many distinct possibilities for $\alpha_k(\cM_x)$ for $x\in X_{\bd}$.
\end{corollary}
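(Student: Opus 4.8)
The plan is to induct downward on $k$, using the three preceding results (Lemma~\ref{lem:hilbert values}, Lemma~\ref{lem:hilb and alpha}, and Lemma~\ref{lem:finite betti}) together with the $\GL$-noetherianity of $X_{\bd}$. First I would observe that for $k>s$ the claim is trivial, since $\alpha_k(\cM_x)=0$ identically by Remark~\ref{rmk:summary}. So fix some $-2\le k\le s$ and assume, as an inductive hypothesis, that for every $j$ with $k<j\le s$ there are only finitely many possible values of $\alpha_j(\cM_x)$ as $x$ ranges over $X_{\bd}$. Since there are finitely many tuples $\nu=(\nu_s,\dots,\nu_{k+1})$ of such values, and $X_{\bd}=\bigcup_\nu X^\nu_{\bd}$ over this finite set, it suffices to bound the number of values of $\alpha_k(\cM_x)$ on each $X^\nu_{\bd}$ separately.

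Now I fix such a $\nu$ and work on $X^\nu_{\bd}$. On this locus the Hilbert function of $\cM_x$ in degrees $k+1,\dots,s$ is constant by Lemma~\ref{lem:hilb and alpha}; more to the point, I want to understand $\alpha_k(\cM_x)=\sum_{i\ge 0}(-1)^i\beta_{i,k}(\cM_x)=\bb_k-\bb'_k+\sum_{i\ge 2}(-1)^i\beta_{i,k}(\cM_x)$, where the first two Betti numbers along this diagonal are the \emph{fixed} values $\bb_k$ and $\bb'_k$, and the remaining terms $\beta_{i,k}(\cM_x)$ for $i\ge 2$ are exactly the quantities Lemma~\ref{lem:finite betti} shows are upper semi-continuous on $X^\nu_{\bd}$ (taking $j=k$ there, which is allowed since $k\le j\le s$). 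So it is enough to show each $\beta_{i,k}(\cM_x)$ ($i\ge 2$) takes finitely many values; note the sum over $i$ is finite (uniformly, since $\beta_{i,k}=0$ for $i> s+2-k$ by the shape of \eqref{eqn:beta mx}), so finitely many values for each of finitely many Betti numbers gives finitely many values for $\alpha_k$.

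To bound $\beta_{i,k}(\cM_x)$ I would pass to the finite-level spaces and use $\GL$-noetherianity: by \cite[Corollary~2.8]{genstillman} it suffices to argue on $X^\nu_{\bd}$ itself. For each $r\in\bN$, the set $Y_r=\{x\in X^\nu_{\bd}\mid \beta_{i,k}(\cM_x)\ge r\}$ is $\GL$-invariant and Zariski closed by Lemma~\ref{lem:finite betti}, and $Y_0\supseteq Y_1\supseteq Y_2\supseteq\cdots$. Since $X^\nu_{\bd}$ is $\GL$-noetherian, this chain stabilizes, say $Y_{r_0}=Y_{r_0+1}=\cdots$; but a module invariant like $\beta_{i,k}$ is always finite on any given module, so the intersection $\bigcap_r Y_r$ is empty, forcing $Y_{r_0}=\emptyset$. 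Hence $\beta_{i,k}(\cM_x)<r_0$ for all $x$, giving finitely many values. Running this for each $i$ with $2\le i\le s+2-k$ and assembling as above completes the inductive step, and hence the corollary.

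The main obstacle I anticipate is bookkeeping rather than conceptual: making sure the diagonal indices line up so that Lemma~\ref{lem:finite betti} actually applies at $j=k$ (which is why the statement is phrased with the shift to $k+1,\dots,s$ in the Hilbert-function constraint and $k\le j\le s$ in the semicontinuity conclusion), and confirming the downward induction terminates correctly at $k=-2$. One should also double-check that the semicontinuity in Lemma~\ref{lem:finite betti} is stated for the limit space $X_{\bd}$ (via its finite levels) so that the noetherianity argument has Zariski-closed invariant loci to work with; this is exactly what the cone-stability of Betti numbers and \cite[Proposition~2.8]{genstillman} are there to provide.
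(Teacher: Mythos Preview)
Your argument follows the same descending-induction strategy as the paper's proof, and the core mechanism---stratify by $\nu$, invoke Lemma~\ref{lem:finite betti} for $\beta_{i,k}$ with $i\ge 2$, then use $\GL$-noetherianity to force finiteness---is exactly right.

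There is one small gap worth flagging. You assert that ``the first two Betti numbers along this diagonal are the \emph{fixed} values $\bb_k$ and $\bb'_k$.'' First, the index is off: the diagonal for $\alpha_k$ passes through $\beta_{0,k}=\bb_k$ and $\beta_{1,k}=\bb'_{k+1}$ (not $\bb'_k$); see the layout in \eqref{eqn:beta mx} and the identity $\gamma_{i,j}=\beta_{i+j,-j}$. More importantly, $\beta_{1,k}(\cM_x)=\bb'_{k+1}$ only holds when $\phi^\vee$ is a \emph{minimal} presentation of $\cM_x$, i.e., when $x\in U_{\bd}$; for general $x\in X_{\bd}$ one only has $\beta_{1,k}(\cM_x)\le \bb'_{k+1}$. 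The paper handles this by reducing at the outset to $U_{\bd'}$ for the finitely many $\bd'\le\bd$ via Lemma~\ref{lem:open}. Your argument is easily repaired either by inserting that same reduction, or by simply observing that $\beta_{0,k}=\bb_k$ always (the matrix has no unit entries) and $0\le\beta_{1,k}\le\bb'_{k+1}$, which already gives finitely many values for these two terms. With that correction the proof goes through as you wrote it.
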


\begin{proof}
  Using Lemma~\ref{lem:open} it suffices to understand the distinct possibilities for $x \in U_{\bd'}$ for all $\bd' \le \bd$. For any given $\bd$, there are only finitely many possibilities for $\bd'$, so without loss of generality, it is enough to consider $x \in U_\bd$.

The case $k=s$ is immediate as $\alpha_s(\cM_x)=\bb_s$ by definition, and we proceed by descending induction on $k$.  We assume that there are finitely many distinct possibilities for $\alpha_j(\cM_x)$ for each $k+1\leq j \leq s$.  This implies that for $x \in U_{\bd}$ there are only finitely many possibilities for the vector $\nu = (\alpha_{k+1}(\cM_x),\dots,\alpha_s(\cM_x))$.  It thus suffices to prove the claim after restricting to one such $\nu$.

For a fixed $\nu$, we apply Lemma~\ref{lem:finite betti} to conclude, via $\GL$-noetherianity, that there are only finitely many possibilities for each $\beta_{i,k}(\cM_x)$ for each $i\geq 2$.  It follows that, even taking the union over all $\nu$, there are still only finitely many possibilities for each $\beta_{i,k}(\cM_x)$ for each $i\geq 2$.  We combine this with the following facts: $\beta_{0,k}=\bb_k$ and $\beta_{1,k} =\bb'_{k+1}$ (since $x \in U_{\bd}$); $\alpha_k(\cM_x)=\sum_{i=0}^\infty (-1)^i \beta_{i,k}(\cM_x)$ by definition; and $\beta_{i,k}(\cM_x)=0$ for $i>s-k$ by Remark~\ref{rmk:summary}. We conclude that there are only finitely many possibilities for $\alpha_k(\cM_x)$ as desired.
\end{proof}

Combining Lemma~\ref{lem:finite betti} and Corollary~\ref{cor:alpha} yields:

\begin{corollary}\label{cor:2nd column}
For any $-2\leq k \leq s$, there are only finitely many distinct possibilities for $\beta_{2,k}(\cM_x)$ for $x\in X_{\bd}$.
\end{corollary}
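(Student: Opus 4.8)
The plan is to isolate the single Betti number $\beta_{2,k}$ and rerun the bookkeeping that already appears in the proof of Corollary~\ref{cor:alpha}. First note the edge cases are trivial: by Remark~\ref{rmk:summary} one has $\beta_{2,k}(\cM_x)=0$ whenever $k>s-2$, so it suffices to treat $-2\le k\le s-2$, in which case the index range $k+1,\dots,s$ used below is nonempty. The idea is then to stratify $X_{\bd}$ into finitely many pieces on each of which $\beta_{2,k}$ is upper semi-continuous, and to conclude a uniform bound from $\GL$-noetherianity.

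Concretely, I would argue as follows. By Corollary~\ref{cor:alpha} applied to each $j$ with $k+1\le j\le s$, the invariant $\alpha_j(\cM_x)$ takes only finitely many values on $X_{\bd}$, so only finitely many vectors $\nu=(\nu_s,\dots,\nu_{k+1})$ arise as $(\alpha_s(\cM_x),\dots,\alpha_{k+1}(\cM_x))$, and $X_{\bd}$ is the union of the corresponding finitely many strata $X^{\nu}_{\bd}$ from Lemma~\ref{lem:finite betti}. Since the $\alpha_j$ are module invariants and the $\GL$-action on $X_{\bd}$ is by change of coordinates (so $\cM_{gx}\cong\cM_x$ for $g\in\GL$), each $X^{\nu}_{\bd}$ is $\GL$-invariant, hence $\GL$-noetherian as a subspace of the $\GL$-noetherian space $X_{\bd}$. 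It therefore suffices to bound $\beta_{2,k}(\cM_x)$ on a single fixed stratum.

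Fixing $\nu$, I would apply Lemma~\ref{lem:finite betti} with $i=2$ and $j=k$ (legal since $k\le j\le s$) to see that $x\mapsto\beta_{2,k}(\cM_x)$ is upper semi-continuous on $X^{\nu}_{\bd}$, so the sets $Y_r:=\{x\in X^{\nu}_{\bd}\mid \beta_{2,k}(\cM_x)\ge r\}$ form a descending chain of $\GL$-invariant closed subsets. By $\GL$-noetherianity this chain stabilizes, and since $\beta_{2,k}(\cM_x)$ is a finite integer for each individual $x$, the intersection $\bigcap_r Y_r$ is empty; hence the chain stabilizes to $\emptyset$, giving a uniform bound $\beta_{2,k}(\cM_x)<r_0$ on $X^{\nu}_{\bd}$. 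Taking the union over the finitely many $\nu$ then finishes the proof. All the analytic content is already packaged into Lemma~\ref{lem:finite betti} and Corollary~\ref{cor:alpha}, so I do not expect a genuine obstacle; the only points worth a moment's care are checking $\GL$-invariance of the strata $X^{\nu}_{\bd}$ (so that $\GL$-noetherianity descends to them) and the standard observation that a stabilizing descending chain of closed sets with empty total intersection must stabilize to the empty set, which is what upgrades ``eventually constant'' to an actual bound.
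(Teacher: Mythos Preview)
Your proposal is correct and follows essentially the same approach as the paper: invoke Corollary~\ref{cor:alpha} to get finitely many strata on which all relevant $\alpha_j$ are fixed, apply Lemma~\ref{lem:finite betti} for upper semi-continuity of $\beta_{2,k}$ on each stratum, and conclude via $\GL$-noetherianity. The paper's proof is terser (it fixes all $\alpha_j$ for $-2\le j\le s$ rather than just $j\ge k+1$, and compresses the descending-chain argument into one sentence), but the content is the same; your extra care with the edge cases and with checking that the strata inherit $\GL$-noetherianity is fine but not strictly needed.
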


\begin{proof}
By Corollary~\ref{cor:alpha}, for $x\in X_{\bd}$ there are only finitely many possibilities for $\alpha_j(\cM_x)$ for each $-2 \leq j \leq s$.  We may thus restrict to a locus in $X_{\bd}$ where all of these values are constant.  Then we can apply Lemma~\ref{lem:finite betti} to conclude that $\beta_{2,k}(\cM_x)$ is upper semi-continuous and by $\GL$-noetherianity, this implies that there are only finitely many possible values.
\end{proof}

The following corollary will also be a useful special case.

\begin{corollary}\label{cor:above zero row}
For $i\geq 2$ we let $Z_i\subseteq X_{\bd}$ be the locus of $x$ such that $\beta_{i,j}(\cM_x)\ne 0$ for some $j>-i$.\footnote{In reference to \eqref{eqn:beta mx}, this means that $\beta(\cM_x)$ has a nonzero entry in column $i$ and some row strictly above the zeroth row.}  Then $Z_{i} \supseteq Z_{i+1}$ and each $Z_i$ is Zariski closed.
\end{corollary}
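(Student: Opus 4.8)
The plan is to build $Z_i$ out of the semicontinuity results already established and then use $\GL$-noetherianity only implicitly (through those results). First observe the containment $Z_i \supseteq Z_{i+1}$: if $\beta_{i+1,j}(\cM_x) \ne 0$ for some $j > -(i+1)$, then minimality of the free resolution of $\cM_x$ forces a nonzero entry in column $i$ lying weakly above, i.e. $\beta_{i,j'}(\cM_x) \ne 0$ for some $j' \ge j > -(i+1)$, hence $j' \ge -i$, and since there are no entries in row $s+1$ (Remark~\ref{rmk:summary}) in fact $j' > -i$ unless we are exactly on the zeroth row — a small case-check using the shape of \eqref{eqn:beta mx} closes this. (Alternatively: a resolution of a module with regularity floor in the zeroth row which is ``linear from column $i$ on'' stays linear from column $i+1$ on.)

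For Zariski-closedness, the key point is that ``$\beta_{i,j}(\cM_x) \ne 0$ for some $j > -i$'' only involves finitely many $(i',j)$ pairs: by Remark~\ref{rmk:summary}, $\beta_{i',j}(\cM_x) = 0$ whenever $j > s$, so the relevant window is $-i < j \le s$, a finite set, and for each such $j$ we are asking whether $\beta_{i,j}(\cM_x) \ge 1$. So I would write
\[
Z_i = \bigcup_{-i < j \le s} \{ x \in X_{\bd} \mid \beta_{i,j}(\cM_x) \ge 1 \}.
\]
A finite union of closed sets is closed, so it suffices to show each $\{x \mid \beta_{i,j}(\cM_x) \ge 1\}$ is closed. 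Here is where I invoke the earlier machinery: by Corollary~\ref{cor:alpha} there are only finitely many possible values of the tuple $(\alpha_{k+1}(\cM_x), \dots, \alpha_s(\cM_x))$ for the relevant $k$ (take $k = -i$, which lies in $[-2, s]$ precisely when $i \le 2$ — so for $i > 2$ I instead take $k$ as small as the statement of Corollary~\ref{cor:alpha} permits, namely $k = -2$, which still covers all $j$ in our window since $j > -i \ge -2$ fails only... ) — let me be careful: our window is $-i < j \le s$ with $i \ge 2$, so $j \ge -i+1$; when $i = 2$ this is $j \ge -1$ and when $i > 2$ we have $j \ge -i+1 \le -2$. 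For $j \ge -2$ Corollary~\ref{cor:alpha} and Lemma~\ref{lem:finite betti} apply directly; for the finitely many $j$ with $-i < j < -2$ (only occurring when $i \ge 4$) one applies Lemma~\ref{lem:finite betti} with $k = j$, noting $j \ge -i+1$ but the hypothesis $-2 \le k$ must be revisited — this is the subtle point.

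I expect the main obstacle to be exactly this indexing issue: Lemma~\ref{lem:finite betti} and Corollary~\ref{cor:alpha} are stated only for $-2 \le k \le s$, so extending semicontinuity of $\beta_{i,j}$ to columns with $j < -2$ needs justification. The resolution is that one only ever \emph{needs} $j \ge -2$ here: in the application (Corollary~\ref{cor:regbd}) the sheaf case has $\cM_x$ with regularity floor in the zeroth row, so $\beta_{i,j}(\cM_x) = 0$ for all $j < -i$ automatically, and combined with minimality, a nonzero entry strictly above the zeroth row in column $i \ge 2$ can in fact be pushed leftward to a nonzero entry in column $2$ strictly above the zeroth row, i.e. to $\beta_{2,j}(\cM_x) \ne 0$ with $j > -2$, hence $j \in \{-1, 0, \dots, s\}$. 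So the clean argument is: $Z_i = Z_2$ for all $i \ge 2$ by minimality (a linear strand argument), and $Z_2 = \bigcup_{-1 \le j \le s}\{x \mid \beta_{2,j}(\cM_x) \ge 1\}$ is a finite union of closed sets by Lemma~\ref{lem:finite betti} together with Corollary~\ref{cor:alpha} (to reduce to a locus where the finitely many relevant $\alpha_j$ are constant, on which $x \mapsto \beta_{2,j}(\cM_x)$ is upper semicontinuous). This both establishes $Z_i \supseteq Z_{i+1}$ (indeed equality) and closedness in one stroke, and sidesteps the indexing worry entirely.
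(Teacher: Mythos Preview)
Your argument for the containment $Z_i \supseteq Z_{i+1}$ and for the closedness of $Z_2$ is fine and matches the paper. The genuine gap is your claim that $Z_i = Z_2$ for all $i \ge 2$. Minimality only propagates nonzero entries \emph{leftward}: a nonzero $\beta_{i+1,j}$ with $j > -(i+1)$ forces some nonzero $\beta_{i,j'}$ with $j' \ge j$, hence $j' > -i$. It does \emph{not} propagate rightward: a nonzero $\beta_{2,j}$ with $j > -2$ says nothing about column $i$ for $i > 2$. Indeed, for any individual $x$ the resolution of $\cM_x$ eventually becomes linear (Lemma~\ref{lem:reg bounds}), so $x \notin Z_i$ for $i \gg 0$; thus $Z_i \subsetneq Z_2$ for large $i$ whenever $Z_2 \ne \emptyset$. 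This is not a technicality: the entire proof of Corollary~\ref{cor:regbd} rests on the chain $Z_2 \supseteq Z_3 \supseteq \cdots$ being strictly decreasing and eventually empty, which your equality would destroy.

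You correctly identified the real obstruction: Lemma~\ref{lem:finite betti} is stated only for $k \ge -2$, so for $i \ge 4$ one cannot directly conclude semicontinuity of $\beta_{i,j}$ at $j = -i+1 < -2$. The paper resolves this not by collapsing all $Z_i$ to $Z_2$, but by \emph{iterating the parametrization}. Inside $Z_2$ there are only finitely many possibilities for the vector $(\beta_{2,-2},\dots,\beta_{2,s-2})$ (Corollary~\ref{cor:2nd column}); fixing one such possibility determines a new pair of column vectors and hence a new degree sequence $\bd'$, and one reruns the $Z_2$-argument in $X_{\bd'}$ to handle $Z_3$, then iterates. In other words, the indexing window $-2 \le k \le s$ is not extended; instead the whole picture is shifted one column to the right at each step, so that the ``new'' column $2$ is the old column $3$, and the lemma applies afresh.
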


\begin{proof}
We have $Z_i\subseteq Z_{i-1}$ for degree reasons:  if $x\notin Z_{i-1}$ then the $(i-1)$st column of $\beta(\cM_x)$ has no nonzero entries above the zeroth row, and the same must be true for the $i$th column, and thus $x\notin Z_i$.

The locus $Z_2$ is a finite union of Zariski closed loci by Lemma~\ref{lem:finite betti}, and hence it is Zariski closed.  We will prove that $Z_k$ is also a finite union of closed loci for any $k>2$. Within $Z_2$, there are only finitely many distinct possibilities for the Betti numbers $\beta_{2,k}(\cM_x)$ with $-2\leq k \leq s$.  We restrict to one such possibility, which determines a new degree sequence, and we apply the same argument as we used for $Z_2$.  Iterating the argument yields the desired result for $Z_i$.  
\end{proof}

\section{Proof of Theorem~\ref{mainthm}}\label{sec:proof}
Tate resolutions are stable under pushforward via linear embeddings $\bP^n\to \bP^{n+1}$.  This is like a natural dual of the notion of cone-stability introduced in~\cite{genstillman}.
\begin{lemma}\label{lem:pushforward}
Let $\cE$ be a coherent sheaf on $\bP^n$ and let $\iota \colon \bP^n\to \bP^{n+1}$ a linear embedding. The Tate resolution $\bT(\cE)$ is stable under pushforward by $\iota$, namely: $\bT(\cE)\otimes_{\bigwedge W_n} \bigwedge W_{n+1}$ is isomorphic to $\bT(\iota_*\cE)$. 
\end{lemma}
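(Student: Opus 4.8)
The plan is to relate the pushforward of a coherent sheaf under a linear embedding to the behavior of cohomology on the two projective spaces, and then to recognize everything in terms of the functoriality of the $\bR$/$\bL$ correspondence in the number of variables.

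First I would set up the comparison between $\bigwedge W_n$ and $\bigwedge W_{n+1}$ explicitly. A linear embedding $\iota\colon \bP^n\to\bP^{n+1}$ corresponds to a surjection $V_{n+1}\twoheadrightarrow V_n$, or dually an inclusion $W_n\hookrightarrow W_{n+1}$, hence an inclusion of algebras $E:=\bigwedge W_n\hookrightarrow E':=\bigwedge W_{n+1}$. The key numerical input is that $\iota$ is finite, $\iota^*\cO_{\bP^{n+1}}(1)=\cO_{\bP^n}(1)$, and $\iota_*$ is exact, so $\rH^i(\bP^{n+1},(\iota_*\cE)(j))=\rH^i(\bP^{n+1},\iota_*(\cE(j)))=\rH^i(\bP^n,\cE(j))$ for all $i,j$. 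Thus $\gamma(\iota_*\cE)=\gamma(\cE)$, and by \eqref{eqn:Tate} the free modules $\bT(\cE)^k$ and $\bT(\iota_*\cE)^k$ have the same graded Betti numbers once one accounts for the change of base ring. So what must be checked is that the \emph{complex} $\bT(\cE)\otimes_E E'$ is again everywhere exact and minimal, hence \emph{is} a Tate resolution, and that it computes the cohomology of $\iota_*\cE$, i.e. is isomorphic to $\bT(\iota_*\cE)$.

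The cleanest route is to use the description of Tate resolutions via the $\bR$-functor of \S\ref{sec:BGG} together with its compatibility with the extension $E\hookrightarrow E'$. For $m\gg 0$ the truncation $\bT(\cE)^{\ge m}$ is, after dualizing, the minimal free resolution over $E$ of a module of the form $\bF^\vee_{\ge m}$ coming from $\bR(\Gamma_{\ge m}(\cE))$ where $\Gamma_{\ge m}(\cE)=\bigoplus_{j\ge m}\rH^0(\bP^n,\cE(j))$; this is exactly the content of~\cite[Theorem~4.1]{efs} recalled in \S\ref{subsec:tate}. Now I would observe two compatibilities: (i) $\Gamma_{\ge m}(\iota_*\cE)=\Gamma_{\ge m}(\cE)$ as graded vector spaces, with $S'=\Sym(V_{n+1})$ acting through $S'\twoheadrightarrow S$; and (ii) the functor $\bR$ is compatible with the surjection $S'\to S$ and the inclusion $E\hookrightarrow E'$ in the sense that $\bR_{E'}(S'\text{-module obtained by restriction from an }S\text{-module }M)=\bR_E(M)\otimes_E E'$ --- this follows directly from the formula $\partial(m)=\sum_i x_i m\otimes e_i$, since enlarging the variable set only adds the terms $\sum_{i=n+1}^{n+1} x_i m\otimes e_i$ which vanish because $V_{n+1}\to V_n$ kills the extra variable, so those $x_i$ act as zero on $M$. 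Combining (i) and (ii), the linear complex associated to $\Gamma_{\ge m}(\iota_*\cE)$ over $E'$ is exactly $\bigl(\text{the one over }E\text{ for }\Gamma_{\ge m}(\cE)\bigr)\otimes_E E'$. Since $E'$ is free (hence flat and faithfully flat) over $E$, tensoring a minimal free resolution over $E$ by $E'$ yields a free resolution over $E'$, and minimality is preserved because the differentials have entries in the maximal ideal $\bigwedge^{\ge 1}W_n\subseteq\bigwedge^{\ge 1}W_{n+1}$. Hence $\bT(\cE)^{\ge m}\otimes_E E'$ is the minimal free resolution computing the $\ge m$ part of $\bT(\iota_*\cE)$; since both complexes are doubly-infinite, everywhere exact, minimal, and agree in high cohomological degree, they are isomorphic by uniqueness of the Tate resolution (equivalently, one runs the same argument for the dual, covering the negative cohomological degrees). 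This gives $\bT(\cE)\otimes_E E'\cong\bT(\iota_*\cE)$.

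The main obstacle I anticipate is bookkeeping at the level of the $\bR$/$\bL$ correspondence: one must be careful that $\bR$ over the larger algebra applied to the \emph{restricted} module genuinely returns the base-changed complex, and in particular that no new nonzero differential components are introduced by the extra variables (this is where the surjectivity $V_{n+1}\to V_n$, not merely an inclusion of symmetric algebras, is essential). A secondary point to handle cleanly is passing from the ``high cohomological degree'' truncation to the whole doubly-infinite complex; this is routine given uniqueness of Tate resolutions and the self-injectivity of the exterior algebra (already used in the proof of Lemma~\ref{lem:reg bounds}), so I would invoke that rather than re-derive it. Everything else --- exactness of $\iota_*$, the cohomology comparison, flatness of $E\hookrightarrow E'$ --- is standard.
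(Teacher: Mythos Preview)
Your proposal is correct and follows essentially the same approach as the paper: both arguments pass to high cohomological degree, identify the section module $\Gamma_{\ge m}(\iota_*\cE)$ with $\Gamma_{\ge m}(\cE)$ viewed through the surjection $\Sym(V_{n+1})\to\Sym(V_n)$, use the explicit formula $\partial(m)=\sum_i x_i m\otimes e_i$ to see that the extra variable contributes nothing to the differential, and then invoke uniqueness of the Tate resolution. Your version is a bit more explicit about why the tensored complex is exact (flatness of $E\hookrightarrow E'$) and minimal, whereas the paper compresses this into the single observation that it suffices to match one differential; but the content is the same.
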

\begin{proof}
We consider $V_n=\langle x_0,\dots,x_n\rangle$ as a direct summand of $V_{n+1}=\langle x_0,\dots,x_{n+1}\rangle=V_n\oplus V'$.  Since the Tate resolution is everywhere exact, and since minimal free resolutions over $\bigwedge W_{n+1}$ are unique up to isomorphism, it is enough to check the lemma for a single differential in the  complex.   Let $k\geq \reg(\cE)$.  Define the $\Sym(V_n)$-module $M=\bigoplus_{j\geq k} \rH^0(\bP^n, \cE(j))$ and the $\Sym(V_{n+1})$-module $M'=\bigoplus_{j\geq k} \rH^0(\bP^{n+1}, \iota_*\cE(j))$.  By definition, $M_k\cong M'_k$ for all $k$, and $x_{n+1}$ annihilates $M'$.

The differential $\rho^k \colon \bT(\iota_*\cE)^k\to \bT(\iota_*\cE)^{k+1}$ is defined by $M'_k\to M'_{k+1}\otimes W_{n+1}$ which is the adjoint of the map $M'_k\otimes V_{n+1}\to M'_{k+1}$.  However, since $x_{n+1}$ annihilates $M'$ we have:
\[
\xymatrix{
M'_k\otimes V_{n+1}\ar[r]\ar[d]& M'_{k+1}\\
M'_k\otimes V_n\ar[ru]
},
\]
where the vertical arrow is induced by the projection $V_{n+1}\to V_n$.  It follows that the image of $M'_k\to M'_{k+1}\otimes W_{n+1}$ lies in $M'_{k+1}\otimes W_n$.  In particular, $\rho^k$ only involves linear forms from $\bigwedge W_n$.

Now, the differential $\partial^k \colon \bT(\cE)^k\to \bT(\cE)^{k+1}$ is defined by $M_k\to M_{k+1}\otimes W_n$, which is the adjoint of the map $M_k\otimes V_n\to M_{k+1}$.  Since we have natural isomorphisms $M_k\cong M'_k$ and $M_{k+1}\cong M'_{k+1}$, we can identify the map $M_k\to M_{k+1}\otimes W_n$ with $M'_k\to M'_{k+1}\otimes W_n$.  This identifies the matrix $\phi$ with the matrix $\phi'$.  By exactness and uniqueness of Tate resolutions, this implies the desired statement.
\end{proof}

\begin{remark}\label{rmk:pushforward}
Let $x \in X^0_{\bd}\cap X_{\bd, n}$ and $\cE_{x}$ be the corresponding coherent sheaf on $\bP^n$.  Lemma~\ref{lem:pushforward} implies that if we then consider $x$ as a point $\phi \in X_{\bd}^0\cap X_{\bd,n+1}$, the corresponding sheaf would simply be $\iota_*(\cE_{x})$ where $\iota\colon \bP^n\to \bP^{n+1}$.  Since cohomology groups are invariant under closed immersions, it follows that the function which sends $x$ to the cohomology table of $\cE_x$ is well-defined on $X_{\bd}^0$.  We may henceforth refer to the cohomology table of $\cE_{x}$ without reference to $n$.
\end{remark}

\begin{lemma}\label{lem:lin Tate}
  Let $\cE$ be a coherent sheaf on $\bP^n$.  If there exists $m<n$ such that the matrices in the Tate resolution $\bT(\cE)$ only involve the variables $e_0,\dots,e_m$, then there exists a coherent sheaf $\cE'$ on $\bP^m$ and a linear embedding $\iota' \colon \bP^m\to \bP^n$ such that $\iota'_*\cE'\cong \cE$.
\end{lemma}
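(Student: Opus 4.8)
The plan is to reverse the construction relating sheaves to their Tate resolutions, essentially running the argument that produced $\cE_x$ from a point $x\in X^0_{\bd}$, but now in a situation where we already know the Tate resolution uses only finitely many exterior variables. First I would choose a cohomological degree $k$ large enough that $k\ge\reg(\cE)$, so that by \cite{efs} the truncated Tate resolution $\bT(\cE)^{\ge k}$ is a linear complex. By hypothesis every differential in $\bT(\cE)$, and in particular every differential in $\bT(\cE)^{\ge k}$, only involves the variables $e_0,\dots,e_m$. The key observation is that this means $\bT(\cE)^{\ge k}$ is, up to extension of scalars, defined over the smaller exterior algebra $\bigwedge W_m$: writing $\bG^{\ge k}$ for the complex of free $\bigwedge W_m$-modules with the same Betti numbers and the same (now $\bigwedge W_m$-valued) differential matrices, we have $\bT(\cE)^{\ge k}\cong \bG^{\ge k}\otimes_{\bigwedge W_m}\bigwedge W_n$.

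Next I would apply the BGG functor $\bL$ (over the small algebra) to the dual of $\bG^{\ge k}$. Since $\bG^{\ge k}$ is linear, so is its dual, and $\bL$ applied to a linear complex over $\bigwedge W_m$ produces (a complex concentrated in a single homological degree given by) a $\Sym(V_m)$-module $M'$; let $\cE'$ be the associated coherent sheaf on $\bP^m=\Proj\Sym(V_m)$. By construction and the uniqueness of minimal free resolutions, the truncated Tate resolution $\bT(\cE')^{\ge k}$ is isomorphic to $\bG^{\ge k}$ (this is exactly the reversibility discussed in \S\ref{ss:parameter}). Now let $\iota'\colon\bP^m\to\bP^n$ be the linear embedding dual to the projection $V_n\to V_m$. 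By Lemma~\ref{lem:pushforward} (applied $n-m$ times, via a chain of linear embeddings $\bP^m\to\bP^{m+1}\to\cdots\to\bP^n$), we get $\bT(\iota'_*\cE')\cong \bT(\cE')\otimes_{\bigwedge W_m}\bigwedge W_n$, whose truncation agrees with $\bG^{\ge k}\otimes_{\bigwedge W_m}\bigwedge W_n\cong\bT(\cE)^{\ge k}$. Since a coherent sheaf is determined by any single high differential of its Tate resolution (equivalently, by the truncation $\bT(-)^{\ge k}$ together with the equivalence of derived categories from the BGG correspondence), we conclude $\iota'_*\cE'\cong\cE$.

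The step I expect to be the main obstacle is making precise the claim that ``$\bT(\cE)$ uses only $e_0,\dots,e_m$'' genuinely descends the complex to $\bigwedge W_m$ rather than merely constraining the matrix entries — one must check that the composites of consecutive differentials vanish already over $\bigwedge W_m$ (they do, since they vanish over the faithfully flat extension $\bigwedge W_n$), so that $\bG^{\ge k}$ is a bona fide complex, and that it remains exact and free. A secondary technical point is bookkeeping with the degree shift $k$: one should check that the resulting $\cE'$ does not depend on the choice of $k\ge\reg(\cE)$, which again follows from uniqueness of minimal free resolutions and the fact that passing from $\bG^{\ge k}$ to $\bG^{\ge k+1}$ just drops a free summand. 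Neither of these is deep, so the proof should be short once the descent of the complex is set up carefully.
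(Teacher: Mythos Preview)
Your approach is correct, but the paper's proof is considerably more direct. Rather than descending the truncated Tate complex to $\bigwedge W_m$, reconstructing a sheaf $\cE'$ via $\bL$, and then invoking Lemma~\ref{lem:pushforward} to match $\iota'_*\cE'$ with $\cE$, the paper simply reads off the module-theoretic consequence of the hypothesis. For $k\ge\reg(\cE)$ the differential $\bT(\cE)^j\to\bT(\cE)^{j+1}$ encodes the adjoint $M_j\to M_{j+1}\otimes W_n$ of the multiplication map on the section module $M=\bigoplus_{j\ge k}\rH^0(\bP^n,\cE(j))$; if that adjoint lands in $M_{j+1}\otimes W_m$ for every $j$, then $x_{m+1},\dots,x_n$ annihilate $M$, so $M$ is already a $\Sym(V_m)$-module and $\cE$ is scheme-theoretically supported on the linear $\bP^m$. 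No appeal to Lemma~\ref{lem:pushforward}, faithfully flat descent of complexes, or reconstruction via $\bL$ is needed.

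What your route buys is a clean structural picture: it makes explicit that Lemma~\ref{lem:lin Tate} is the converse of Lemma~\ref{lem:pushforward}, and it exercises the ``reversibility'' paragraph of \S\ref{ss:parameter} over the small exterior algebra. The cost is the bookkeeping you yourself flag (checking $\bG^{\ge k}$ is a genuine minimal exact complex after descent, independence of $k$), plus one slip: you should apply $\bL$ to $\bG^{\ge k}$ itself, not its dual---in the paper's notation $\bF^\vee_{\ge k}$ already \emph{is} the truncated Tate resolution, so no further dualization is required. The paper's argument sidesteps all of this by never leaving the section module.
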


\begin{proof}
Let $V_m =\langle x_0,\dots,x_m\rangle$, considered as a split summand of $V_n$.  Let $k\geq \reg(\cE)$ and let $M=\bigoplus_{j\geq k} \rH^0(\bP^n,\cE(j))$.  For each $j\geq k$, the map $M_j\to M_{j+1}\otimes V_n^*$ factors through $M_j\to M_{j+1}\otimes V_m^*$.  Writing $V_n=V_m\oplus V''$,  with $V''=\langle x_{m+1},\dots,x_n\rangle$, the vector space on $x_{m+1},\dots,x_n$, this implies that for the multiplication map $M_j\otimes (V_m\oplus V'')\to M_{j+1}$, the map $M_j\otimes V''\to M_{j+1}$ is zero.  Since this holds for all $j\geq k$, this implies that $M$ is annihilated by $x_{m+1},\dots,x_n$.  In particular, $M$ can be viewed as a module over $\bk[x_0,\dots,x_m]$, and the statement follows.
\end{proof}

The following lemma is a special case of Theorem~\ref{mainthm} and Corollary~\ref{cor:regbd}, where $\bb_i$ and $\bb'_i$ are zero for $i>1$.  We will eventually use noetherian induction to reduce to this case.

\begin{lemma}\label{lem:linear}
Let $\bb=(\bb_0,0,0,\dots)$ and $\bb'=(\bb'_0,0,0,\dots)$ be column vectors.  For any coherent sheaf $\cE$ on $\bP^n$ with $\gamma^0(\cE)=\bb$ and $\gamma^1(\cE)=\bb'$ we have:
\begin{enumerate}[\indent \rm (a)]
\item $\cE$ is $0$-regular, and 
\item  If $n_0=\bb_0 \bb'_{0}-1$ then there exists a coherent sheaf $\cE'$ on $\bP^{n_0}$ and a linear map $\iota\colon \bP^{n_0}\to \bP^n$ such that $\cE\cong \iota_*\cE'$. 
\end{enumerate}
\end{lemma}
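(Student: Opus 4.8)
The plan is to get (a) immediately from the definition of Castelnuovo--Mumford regularity, and to prove (b) by passing to a graded module of global sections and bounding the codimension of the subspace of linear forms annihilating it.

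For (a): the hypothesis $\gamma^0(\cE)=\bb=(\bb_0,0,0,\dots)$ is literally the statement that $\dim_\bk\rH^i(\bP^n,\cE(-i))=0$ for all $i\geq 1$, i.e.\ that $\cE$ is $0$-regular. (The hypothesis on $\gamma^1$ plays no role here.)

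For (b): I would set $M=\bigoplus_{j\geq 0}\rH^0(\bP^n,\cE(j))$, a graded $\Sym(V_n)$-module with $\widetilde M\cong\cE$. Since $\cE$ is $0$-regular by (a), the standard consequences of regularity (global generation of $\cE$ together with surjectivity of $\rH^0(\cE(j))\otimes_\bk V_n\to\rH^0(\cE(j+1))$ for $j\geq 0$) show that $M$ is generated by $M_0$, and by hypothesis $\dim_\bk M_0=\gamma_{0,0}(\cE)=\bb_0$ and $\dim_\bk M_1=\gamma_{0,1}(\cE)=\bb'_0$. Next I would consider the $\bk$-linear map $\mu\colon V_n\to\Hom_\bk(M_0,M_1)$ given by multiplication and set $V''=\ker\mu$. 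Because $M$ is generated in degree $0$, every element of $V''$ annihilates all of $M$, so $M$ is a module over $\Sym(V_n/V'')$; on the other hand $\mu$ embeds $V_n/V''$ into $\Hom_\bk(M_0,M_1)$, so
\[
\dim_\bk(V_n/V'')\leq \dim_\bk M_0\cdot\dim_\bk M_1=\bb_0\bb'_0 .
\]
Setting $m=\dim_\bk(V_n/V'')-1\leq \bb_0\bb'_0-1=n_0$ and choosing coordinates so that $V''=\langle x_{m+1},\dots,x_n\rangle$, the module $M$ is defined over $\bk[x_0,\dots,x_m]$, so $\cE\cong\widetilde M$ is the pushforward, along the linear inclusion $\bP^m\hookrightarrow\bP^n$, of the coherent sheaf $\cE_0$ on $\bP^m$ associated to $M$. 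This already gives the conclusion with $\bP^m$, $m\leq n_0$, in place of $\bP^{n_0}$; to reach $\bP^{n_0}$ exactly (this needs $n\geq n_0$, and otherwise the conclusion should be read with $n_0$ replaced by $n$) I would push $\cE_0$ forward once more along a linear embedding $\bP^m\hookrightarrow\bP^{n_0}$, using that a linear embedding $\bP^m\hookrightarrow\bP^{n_0}$ and one $\bP^m\hookrightarrow\bP^n$ always fit into a commuting triangle with a linear embedding $\bP^{n_0}\hookrightarrow\bP^n$.

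The only substantive point is the estimate $\dim_\bk(V_n/V'')\leq\bb_0\bb'_0$; the step most in need of care is the reduction that $M$ is generated in degree $0$, which is exactly where $0$-regularity (part (a)) enters, plus the minor bookkeeping to land on $\bP^{n_0}$ rather than a smaller $\bP^m$ — so in the end this is a short argument. As an alternative fitting the paper's framework: by (a), $\bT(\cE)^0\cong E^{\bb_0}$ and $\bT(\cE)^1\cong E(-1)^{\bb'_0}$, so $\partial^0$ is a matrix of at most $\bb_0\bb'_0$ linear forms, spanning some $W'\subseteq W_n$; since $E$ is free over $E'=\bigwedge W'$, minimal free resolutions over $E$ of the $E'$-modules $\ker\partial^0$ and $\coker(\partial^0)^\vee$ involve only $W'$, and by everywhere-exactness these assemble to all of $\bT(\cE)$, so Lemma~\ref{lem:lin Tate} applies.
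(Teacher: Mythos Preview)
Your proof is correct. For part~(a) you read $0$-regularity directly off the definition of $\gamma^0(\cE)$, which is actually cleaner than the paper's appeal to the shape of the Betti table via Remark~\ref{rmk:summary}.

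For part~(b), your main argument and the paper's argument are dual to one another. The paper works on the exterior side: since $\bT(\cE)^0=E^{\bb_0}$ and $\bT(\cE)^1=E(-1)^{\bb'_0}$, the differential $\partial^0$ is a $\bb'_0\times\bb_0$ matrix of linear forms in $E$, hence involves at most $\bb_0\bb'_0$ variables; then double exactness of $\bT(\cE)$ plus Lemma~\ref{lem:lin Tate} finishes. You instead work on the symmetric side, bounding $\dim(V_n/V'')$ via the multiplication map $\mu\colon V_n\to\Hom_\bk(M_0,M_1)$. These are literally adjoint: the map $M_0\to M_1\otimes W_n$ underlying $\partial^0$ is the adjoint of $M_0\otimes V_n\to M_1$, so the span of the linear forms appearing in $\partial^0$ is exactly the image of $\mu$ (identifying $W_n$ with $V_n^*$). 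Your approach has the virtue of being entirely elementary---no Tate resolutions, no BGG---while the paper's version stays within the framework used elsewhere in the paper and so leads more naturally into Lemma~\ref{lem:lin Tate}. Your ``alternative'' at the end is essentially the paper's proof verbatim.
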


\begin{proof}
The fact that $\cE$ is $0$-regular follows from Remark~\ref{rmk:summary}: the shape of the Betti table implies that $\cE(j)$ does not have higher cohomology for $j\ge 0$. Choosing a Tate resolution $\bT(\cE)$ for $\cE$, we have $\bT(\cE)^0 = E^{\bb_0}$ and $\bT(\cE)^1 = E(-1)^{\bb'_0}$.  The matrix $\phi \colon E^{\bb_0}\to E(-1)^{\bb'_0}$ involves at most $\bb_0\bb'_1$ many distinct linear forms from $E$.  In particular, if we set $n_0=\bb_0\bb'_0-1$, then after a change of coordinates, we can assume that $\phi$ only involves the variables $e_0,\dots,e_{n_0}$.  Since $\bT(\cE)$ is doubly exact, it follows that the entire Tate resolution $\bT(\cE)$ can be defined over the subring in the variables $e_0,\dots,e_{n_0}$. The statement now follows from Lemma~\ref{lem:lin Tate}.
\end{proof}

\begin{proof}[Proof of Corollary~\ref{cor:regbd}]
For each $x\in X_{\bd}^0$ we write $\cE_x$ (as in Remark~\ref{rmk:pushforward}) for the corresponding coherent sheaf.  Choose $s$ so that $\bb_i=0$ and $\bb'_i=0$ for all $i>s$. The right side of the cohomology table of $\cE_x$ will look like:
\[
\gamma(\cE_x)= \begin{array}{c|cc c c c c c}&&\gamma^0 &\gamma^1&\gamma^2&\dots &\gamma^k&\cdots\cr\hline
              \vdots&\ddots&\ddots&\ddots&\ddots&\ddots&\ddots&\vdots \cr
              s+1&\cdots&0&0&0&\cdots&0&\cdots \cr
              s &\cdots&\bb_s&\bb'_s&\gamma_{s,2-s}&&\gamma_{s,k-s}&\cdots\  \cr
              \vdots& \cdots&\vdots&\vdots&\vdots&\ddots&\vdots&\cdots\cr
              1&\cdots&\bb_1&\bb'_1&\gamma_{1,1}&\cdots&\gamma_{1,k-1}&\cdots\  \cr
              0&\cdots&\bb_0&\bb'_0&\gamma_{0,2}&\cdots&\gamma_{0,k}&\cdots\ \cr \hline
              \end{array}        
\]
In particular, for $k\geq 0$, the column $\gamma^k$ can only have nonzero entries in rows $0,1,\dots,s$.

We can rephrase this in terms of the Betti table of $\cM_x$.  Namely, since $\cM_x$ has regularity zero by Lemma~\ref{lem:reg bounds}, the Betti table of $\cM_x$ will look like
\[
\beta(\cM_x) = 
\begin{array}{c|c c c c c c}&0&1&2 &\cdots&i&\cdots \cr \hline
              s &\bb_s&\bb'_s&\beta_{2,s-2}&\cdots&\beta_{i,s-i}&  \cr
              \vdots &\vdots&\vdots&\vdots&\cdots&\vdots& \cdots \\
              1&\bb_1&\bb'_1&\beta_{2,-1}&\cdots&\beta_{i,1-i}&  \cr
              0&\bb_0&\bb'_0&\beta_{2,-2}&\cdots&\beta_{i,-i}&\cr
       \end{array}. 
      \]
      where $\gamma_{i,j} = \beta_{i+j,-j}$.

For $i\geq 2$ we define $Z_i\subseteq X_{\bd}^0$ as in Corollary~\ref{cor:above zero row}, but restricted to $X_{\bd}^0$.  Namely: some $x\in X_{\bd}^0$ lies in $Z_i$ if and only if $\beta_{i,j}(\cM_x)\ne 0$ for some $j>-i$, which is in turn equivalent to the condition $\gamma_{k,i-k}(\cE_x)\ne 0$ for some $k>0$.  By Corollary~\ref{cor:above zero row}, the $Z_i$ form a descending chain of closed loci in $X_{\bd}^0$.  These thus eventually stabilize by $\GL$-noetherianity.  Since every element of $X_{\bd}^0$ has a cohomology table which is eventually entirely supported on the zeroth row (see Lemma~\ref{lem:reg bounds}), this must stabilize to the empty set. Thus, there is some $k_0$ such that each $\cE$ has regularity at most $k_0$.
\end{proof}

\begin{proof}[Proof of Theorem~\ref{mainthm}]
We have now shown that for every $x\in X_{\bd}^0$, the corresponding sheaf $\cE_{x}$ has a cohomology table that looks like:
\[
\gamma(\cE_x)=
\begin{array}{c|cc c c c c c c c}&\cdots&\gamma^0 &\gamma^1&\gamma^2&\dots &\gamma^{k_0-1}&\gamma^{k_0}&\gamma^{k_0+1}&\cdots\cr \hline
              s &\cdots&\bb_s&\bb'_s&\gamma_{s,2-s}& \cdots &\gamma_{s,k_0-1-s}&0&0&\cdots\  \cr
              \vdots &\ddots&\vdots&\vdots&\vdots& \ddots &\vdots&\vdots & \vdots & \cdots\cr
              1&\cdots&\bb_1&\bb'_1&\gamma_{1,1}&\cdots&\gamma_{1,k_0-2}&0&0&\cdots\  \cr
              0&\cdots&\bb_0&\bb'_0&\gamma_{0,2}&\cdots&\gamma_{0,k_0-1}&\gamma_{0,k_0}&\gamma_{0,k_0+1}&\cdots\ \cr\hline
              \end{array}.
\]
We apply Lemma~\ref{cor:2nd column} to conclude that there are only finitely many possibilities for $\gamma^2(\cE)$.  For each such possibility, we again apply Lemma~\ref{cor:2nd column}  to ensure that there are only finitely many possibilities for $\gamma^3(\cE)$.  Iterating this argument, we eventually conclude that there are only finitely many possibilities for $\gamma^{k_0}$ and $\gamma^{k_0+1}$.

Fix some such pair $(\gamma_{0,k_0},\gamma_{0,k_0+1})$.  Replacing $\cE$ by $\cE(k_0)$, we can apply Lemma~\ref{lem:linear} to see that $\cE$ is defined on a subprojective space of dimension $\gamma_{0,k_0}\gamma_{0,k_0+1}-1$.  We now cycle over all of the finitely many possibilities for this pair $(\gamma_{0,k_0},\gamma_{0,k_0+1})$ let $n_0$ be the maximal dimension of any of the resulting subprojective spaces.
\end{proof}

\begin{proof}[Proof of Corollary~\ref{cor:finite cohom tables}]
By Theorem~\ref{mainthm}, it suffices to prove the corollary for a fixed $n$.  
Since $X_{\bd,n}$ is reduced and noetherian, this follows from Corollary~\ref{cor:finite cohom noeth} below.
\end{proof}

\begin{lemma}\label{lem:cohom tables noetherian}
Let $V$ be a reduced noetherian scheme and let $\cF$ be a coherent sheaf on $V\times \bP^n$.  For a point $x\in V$ we write $\cF_x$ for the pullback to $\{x\} \times \bP^n$.
\begin{enumerate}[(a)]
	\item\label{noeth a} There is a nonempty open set $U\subseteq V$ and an integer $j_0$ such that: for all $x\in U, j\geq j_0$ and all $i=0,1,\dots,n$, the dimension of $\rH^i(\cF_x(j))$ does not depend on $x$.
	\item\label{noeth b} Write $\omega$ for $\cO_V\boxtimes \omega_{\bP^n}$ on $V\times \bP^n$.  For any $i$, there is a nonempty open subset $U\subseteq V$ such that $\cE xt^i(\cF,\omega)_x\cong\cE xt^i(\cF_x,\omega_{\bP^n})$ for all $x\in U$.
	\item\label{noeth c} There is a nonempty open set $U\subseteq V$ and integers $j_-$ and $j_+$ where $\gamma^j(\cF_x)$ does not depend on $x$ for all $x\in U$ and all $j\geq j_+$ or $j\leq j_-$.
	\item\label{noeth d} There is a nonempty open set $U\subseteq V$ where the entire cohomology table of $\cF_x$ does not depend on $x$ for all $x\in U$.
\end{enumerate}
\end{lemma}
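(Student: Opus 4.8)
The plan is to establish the four parts in order, with each building on the previous. The key classical input throughout is the semicontinuity theorem for cohomology of flat families (Hartshorne III.12), together with the fact that a coherent sheaf on $V \times \bP^n$ with $V$ reduced and noetherian can be generically flattened: there is a nonempty open $U \subseteq V$ over which $\cF$ is flat. Since $V$ is noetherian, a finite-intersection argument lets us shrink $U$ finitely many times and still retain a nonempty open set (here reducedness ensures the generic flattening and the fact that a coherent sheaf with locally constant fiber dimension is locally free).

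For part~\eqref{noeth a}, I would first shrink $V$ so that $\cF$ is flat over $V$. By Serre vanishing applied relatively (or by the fact that the pushforward $\pi_*(\cF(j))$ for $\pi\colon V\times\bP^n\to V$ is eventually locally free of the expected rank and the higher $R^i\pi_*(\cF(j))$ eventually vanish), there is a single $j_0$ such that for $j \ge j_0$ all higher cohomology of $\cF_x(j)$ vanishes and $\rH^0(\cF_x(j))$ has dimension equal to the Hilbert polynomial evaluated at $j$, which is independent of $x$ on the connected components; shrinking $V$ to a single component (or using that the Hilbert polynomial is locally constant, hence constant after shrinking) finishes this part. The one subtlety is getting a \emph{uniform} $j_0$ that works for all $x\in U$ simultaneously: this is exactly relative Serre vanishing, valid since $\cF$ is flat and proper over the noetherian base $U$.

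For part~\eqref{noeth b}, after shrinking so that $\cF$ is flat over $V$, I would use that for each fixed $i$ the formation of $\cE xt^i_{\cO_{V\times\bP^n}}(\cF,\omega)$ commutes with base change on a nonempty open set. Concretely, resolve $\cF$ locally by a finite complex of locally frees (possible after shrinking $V$, or work with the relative $\cE xt$ sheaves $\cE xt^i_\pi$): the cohomology-and-base-change formalism for $\cE xt$ (as in Altman--Kleiman or via a bounded complex of flat modules computing $R\pi_*R\cHom$) shows that, since $V$ is reduced, there is a nonempty open $U$ over which each $\cE xt^i(\cF,\omega)_x \cong \cE xt^i(\cF_x,\omega_{\bP^n})$. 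The main obstacle is ensuring the base-change comparison holds on an open set rather than merely at the generic point; this again uses genericflatness plus the fact that a finitely generated module over a reduced noetherian ring is locally free on a dense open of its support.

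For part~\eqref{noeth c}, the $j \ge j_+$ direction is immediate from part~\eqref{noeth a}, taking $j_+ = j_0$. For the $j \le j_-$ direction, I would apply Serre duality on $\bP^n$: $\rH^i(\cF_x(j)) \cong \rH^{n-i}(\cE xt^{?}(\cF_x,\omega_{\bP^n})(-j))^\vee$ via the local-to-global $\cE xt$ spectral sequence, or more cleanly note that for $j \ll 0$ the groups $\rH^i(\cF_x(j))$ are computed by the cohomology of the various $\cE xt^p(\cF_x,\omega)$ twisted by $-j$, and apply part~\eqref{noeth b} together with part~\eqref{noeth a} applied to each of the finitely many coherent sheaves $\cE xt^p(\cF,\omega)$ (each of which we may flatten, again shrinking $U$ finitely often). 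This produces a $j_-$ below which $\gamma^j(\cF_x)$ is determined by the Hilbert polynomials of the $\cE xt^p$ sheaves, hence locally constant, hence constant after a final shrink. Finally, part~\eqref{noeth d} follows by combining~\eqref{noeth c} with semicontinuity: on $U$ from part~\eqref{noeth c} the table is already pinned down outside the finite range $j_- < j < j_+$, and within that finite window there are only finitely many $(i,j)$ with $0 \le i \le n$, so finitely many integers $\gamma_{i,j}(\cF_x)$; each is upper semicontinuous on $U$ (Hartshorne III.12.8) and also, by the invariance of Euler characteristics in flat families, the alternating sums along diagonals are constant, so after shrinking $U$ once more to the locus where each of these finitely many semicontinuous functions attains its generic (minimal) value, the whole table is constant. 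I expect part~\eqref{noeth b} — getting the $\cE xt$ base-change statement on an open set with the correct handling of the reduced (but possibly non-integral, non-regular) base — to be the main technical obstacle; everything else is a bookkeeping assembly of standard semicontinuity.
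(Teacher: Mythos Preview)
Your proposal is correct and follows essentially the same route as the paper: reduce to an integral base by generic flatness, use relative Serre vanishing/regularity to handle $j\gg 0$, use base change for $\cE xt^i(\cF,\omega)$ together with Serre duality and the local-to-global spectral sequence to handle $j\ll 0$, and then finish by semicontinuity on the remaining finite window. The only cosmetic differences are that the paper phrases (d) as ``finitely many open conditions pinning the table to its value at the generic point'' rather than invoking Euler-characteristic constancy, and it simply assumes $V$ integral at the outset rather than shrinking to a component.
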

\begin{proof}
Without loss of generality we can assume throughout that $V$ is integral.  

\ref{noeth a}:  By generic flatness we can assume that $\cF$ is flat over an open set $U$.  Since the regularity of a sheaf is semicontinuous in a flat family, we can moreover assume that $\cF_x$ is $j_0$-regular for all $x$ in some open set $U$.  It follows that the Hilbert polynomial of $\cF_x$ is constant for $x\in U$.  In addition, for each $j\geq j_0$ and $x\in U$, $\dim \rH^0(\cF_x(j))$ equals the value of that constant Hilbert polynomial evaluated at $j$ and the higher cohomology groups vanish.

\ref{noeth b}:  Each of the Ext-sheaves is computed as the $i$th homology of a certain complex of coherent sheaves on $V\times \bP^n$.  By generic flatness, we can find an open set $U$ where taking homology commutes with specialization to points $x\in U$ and the statement follows.

\ref{noeth c}:   Fix some $i$ between $0$ and $n$.  We assume that the conclusion of (b) holds and, by generic flatness, that $\cE xt^i(\cF,\omega)$ is flat over $U$.  We may further find some $j_i$ such that for all $x\in U$, $\cE xt^i(\cF_x,\omega_{\bP^n})(j)$ has no higher cohomology for $j\geq j_i$. We compute:
\[
\rH^0(\cE xt^i(\cF_x,\omega_{\bP^n})(j)) =\rH^0(\cE xt^i(\cF_x(-j),\omega_{\bP^n}))=\Ext^i(\cF_x(-j),\omega_{\bP^n})=\rH^{n-i}(\cF_x(-j)).
\]
Where the first equality is~\cite[Prop III.6.7]{hartshorne}, the second equality follows from the local-to-global  spectral sequence for Ext,
and the third equality is Serre duality.

By part \ref{noeth a}, perhaps after increasing $j_i$ or shrinking to a smaller open set, we can obtain that the dimension of $\rH^0(\cE xt^i(\cF_x,\omega_{\bP^n})(j))$ does not depend on $x$ for all $j\geq j_i$, and thus $\dim \rH^{n-i}(\cF_x(-j))$ does not depend on $x$ for all $j\geq j_i$.  It follows that, in sufficiently negative degrees, there is an open set $U$, where column $\gamma^j(\cF_x)$ does not depend on $x$  for all $x\in U$ and all $j\ll 0$.  A similar statement holds for $j\gg 0$ by part (a).

\ref{noeth d}:  Let $U$ be the open set from part \ref{noeth c}  and let $\eta\in U$ its generic point.  Since each individual entry of the cohomology table is semicontinuous, part \ref{noeth c}  shows that there are a finite number of Zariski open conditions which define the locus in $U$ where the cohomology of $\cF_x$ agrees with the cohomology table of $\cF_\eta$.
\end{proof}

\begin{corollary}\label{cor:finite cohom noeth}
Let $V$ be a reduced noetherian scheme and let $\cF$ be a coherent sheaf on $V\times \bP^n$.  Then
the fibers $\cF_x$ with $x\in V$ attain only finitely many distinct cohomology tables.
\end{corollary}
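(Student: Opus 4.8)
The plan is to deduce this from part~(d) of Lemma~\ref{lem:cohom tables noetherian} by a standard noetherian induction on the closed subsets of $V$. Since the cohomology table of a fiber $\cF_x$ depends only on the restriction of $\cF$ to $V_{\mathrm{red}}\times\bP^n$, we may assume $V$ is reduced; and since a noetherian scheme has only finitely many irreducible components, it suffices to prove the statement when $V$ is integral, which we henceforth assume.

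Suppose the claim fails and let $\Sigma$ be the resulting nonempty collection of reduced closed subschemes $V'\subseteq V$ for which the fibers of $\cF|_{V'\times\bP^n}$ realize infinitely many distinct cohomology tables. Since $V$ is noetherian, $\Sigma$ has a minimal element $V'$. Applying part~(d) of Lemma~\ref{lem:cohom tables noetherian} to each of the finitely many irreducible components of $V'$ and deleting from each the other components, we obtain pairwise disjoint open subsets $U'_i\subseteq V'$, each dense in the corresponding component and each carrying a constant cohomology table on its fibers; their union $U'$ is then open in $V'$, the cohomology table takes only finitely many values (at most one per component) on fibers over $U'$, and $W:=V'\setminus U'$ is a proper closed subset of $V'$ because it omits the generic point of every component. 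By minimality $W\notin\Sigma$, so the fibers of $\cF|_{W\times\bP^n}$ realize only finitely many cohomology tables. Since every point of $V'$ lies in $U'$ or in $W$, the fibers of $\cF|_{V'\times\bP^n}$ then realize only finitely many cohomology tables, contradicting $V'\in\Sigma$. Hence $\Sigma=\emptyset$; in particular $V\notin\Sigma$, which is the assertion of Corollary~\ref{cor:finite cohom noeth}.

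I expect no serious obstacle here: the analytic content — generic flatness, semicontinuity of cohomology, the local-to-global and Serre-duality comparisons that pin down the far-negative columns, and the a priori finiteness of a single cohomology table — has all been absorbed into Lemma~\ref{lem:cohom tables noetherian}, so the present statement is a formal consequence. The only point needing a moment's care is that part~(d) of Lemma~\ref{lem:cohom tables noetherian} is phrased for an integral (or at least reduced) base, so when the minimal $V'$ above is reducible one must invoke it component by component and patch the resulting open sets together, as indicated above; this is routine.
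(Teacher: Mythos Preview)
Your proof is correct and follows essentially the same route as the paper: apply Lemma~\ref{lem:cohom tables noetherian}(d) and conclude by noetherian induction. The paper's proof is a two-line sketch (``apply (d), then noetherian induction''), while you have spelled out the induction via a minimal-counterexample argument and taken care with the reducible case; this extra detail is sound but not a genuinely different approach. One minor remark: your initial reduction to integral $V$ is harmless but unnecessary, since $V$ is reduced by hypothesis and your later argument already handles reducible closed subsets $V'$ component by component.
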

\begin{proof}
By Lemma~\ref{lem:cohom tables noetherian}\ref{noeth d}, there is a Zariski open set $U\subseteq V$ consisting of points with a constant cohomology table.  The statement then follows by noetherian induction.
\end{proof}

\begin{remark}\label{rmk:Betti}
Corollary~\ref{cor:finite cohom tables} is a finiteness result based on fixing {\em any} two consecutive columns of the cohomology table.  By contrast, similar results about Betti tables, like~\cite[Theorem~D]{ananyan-hochster}, require one to fix the first two columns of the Betti table.  

For Betti tables, this turns out to be necessary.  For instance, if $S=\bk[x,y]$ and let $I=(x^n,x^{n-1}y)$, then the Betti table of $S(n-1)/I$ is:
\[
\bordermatrix{ &&&\cr
              -n &1&-&- \cr
              \vdots &\vdots&\vdots&\vdots\cr
              0&-&2&1 \cr
              }            
\]
Thus, there are infinitely many different Betti table with $\beta_{1,1}=2$ and $\beta_{2,2}=1$ and all other entries in columns $1$ and $2$ equal to zero.

However, if we fix one additional piece of data---the degree in which the module is generated---then we can obtain a finiteness result.  This is a consequence of Stillman's Conjecture (which bounds the tail of such a Betti table) and Boij--S\"oderberg theory (which bounds the rest of the entries).  
\end{remark}

\begin{bibdiv}
\begin{biblist}

\bib{ananyan-hochster}{article}{
   author={Ananyan, Tigran},
   author={Hochster, Melvin},
   title={Small subalgebras of polynomial rings and Stillman's Conjecture},
   journal={J. Amer. Math. Soc.},
   volume={33},
   date={2020},
   number={1},
   pages={291--309},
}

\bib{beilinson}{article}{
   author={Be\u\i linson, A. A.},
   title={Coherent sheaves on ${\bf P}^{n}$\ and problems in linear
   algebra},
   language={Russian},
   journal={Funktsional. Anal. i Prilozhen.},
   volume={12},
   date={1978},
   number={3},
   pages={68--69},
}

\bib{bgg}{article}{
   author={Bern\v ste\u\i n, I. N.},
   author={Gelfand, I. M.},
   author={Gelfand, S. I.},
   title={Algebraic vector bundles on ${\bf P}^{n}$\ and problems of
   linear algebra},
   language={Russian},
   journal={Funktsional. Anal. i Prilozhen.},
   volume={12},
   date={1978},
   number={3},
   pages={66--67},
}

\bib{draisma}{article}{
	author = {Draisma, Jan},
	title = {Topological noetherianity for polynomial functors},
        date={2019},
        journal={J. Amer. Math. Soc.},
        volume={32},
        number={3},
        pages={691--707},
	note = {\arxiv{1705.01419v4}},
}

\bib{geom-syzygies}{book}{
   author={Eisenbud, David},
   title={The geometry of syzygies},
   series={Graduate Texts in Mathematics},
   volume={229},
   note={A second course in commutative algebra and algebraic geometry},
   publisher={Springer-Verlag, New York},
   date={2005},
}

\bib{eisenbud-erman}{article}{
   author={Eisenbud, David},
   author={Erman, Daniel},
   title={Categorified duality in Boij-S\"{o}derberg theory and invariants of
   free complexes},
   journal={J. Eur. Math. Soc. (JEMS)},
   volume={19},
   date={2017},
   number={9},
   pages={2657--2695},
}

\bib{efs}{article}{
   author={Eisenbud, David},
   author={Fl\o ystad, Gunnar},
   author={Schreyer, Frank-Olaf},
   title={Sheaf cohomology and free resolutions over exterior algebras},
   journal={Trans. Amer. Math. Soc.},
   volume={355},
   date={2003},
   number={11},
   pages={4397--4426},
}

\bib{eisenbud-schreyer-icm}{article}{
   author={Eisenbud, David},
   author={Schreyer, Frank-Olaf},
   title={Betti numbers of syzygies and cohomology of coherent sheaves},
   conference={
      title={Proceedings of the International Congress of Mathematicians.
      Volume II},
   },
   book={
      publisher={Hindustan Book Agency, New Delhi},
   },
   date={2010},
   pages={586--602},
}

\bib{eisenbud-schreyer-jams}{article}{
   author={Eisenbud, David},
   author={Schreyer, Frank-Olaf},
   title={Betti numbers of graded modules and cohomology of vector bundles},
   journal={J. Amer. Math. Soc.},
   volume={22},
   date={2009},
   number={3},
   pages={859--888},
}

\bib{genstillman}{article}{
	author={Erman, Daniel},
   author={Sam, Steven~V},
   author={Snowden, Andrew},
   title={Generalizations of Stillman's Conjecture via twisted commutative algebras},
   journal={Int. Math. Res. Not. IMRN (to appear)},
   note={\arxiv{1804.09807v1}},
}

\bib{stillman}{article}{
   author={Erman, Daniel},
   author={Sam, Steven V.},
   author={Snowden, Andrew},
   title={Big polynomial rings and Stillman's conjecture},
   journal={Invent. Math.},
   volume={218},
   date={2019},
   number={2},
   pages={413--439},
}

\bib{imperfect}{article}{
   author={Erman, Daniel},
   author={Sam, Steven~V},
   author={Snowden, Andrew},
   title={Big polynomial rings with imperfect coefficient fields},
   journal={Michigan Math. J. (to appear)},
   note={\arxiv{1806.04208v1}}
}

\bib{hartshorne}{book}{
   author={Hartshorne, Robin},
   title={Algebraic geometry},
   note={Graduate Texts in Mathematics, No. 52},
   publisher={Springer-Verlag, New York-Heidelberg},
   date={1977},
}

\bib{mccullough-exterior}{article}{
   author={McCullough, Jason},
   title={Stillman's question for exterior algebras and Herzog's conjecture
   on Betti numbers of syzygy modules},
   journal={J. Pure Appl. Algebra},
   volume={223},
   date={2019},
   number={2},
   pages={634--640},
   note = {\arxiv{1307.8162v2}},
}

\end{biblist}
\end{bibdiv}

\end{document}